\theoremstyle{plain}
\newtheorem{thm}{Theorem}[section]
\newtheorem{prop}[thm]{Proposition}
\newtheorem{lemma}[thm]{Lemma}
\newtheorem*{thrm}{Main Theorem}
\newtheorem*{claim}{Claim}
\numberwithin{equation}{section}
\numberwithin{thm}{section}
\theoremstyle{definition}
\newtheorem{definition}[thm]{Definition}
\newtheorem*{notation}{\normalfont \textit{{Notation}}}
\theoremstyle{remark}
\newtheorem{example}[thm]{Example}
	  \newcommand{\C}{\mathbb{C}}
	  \newcommand{\R}{\mathbb{R}}
	  \newcommand{\D}{\mathbb{D}}
	  \newcommand{\RP}{\mathbb{R}P}
	  \newcommand{\SH}{\mathcal{H}}
	  \newcommand{\SM}{\mathcal{M}}
	  \newcommand{\ct}{T^* S^}
	  \newcommand{\sm}{\smallskip}
	  \newcommand{\op}{\operatorname}
      \newcommand*{\defeq}{\mathrel{\rlap{%
                     \raisebox{0.3ex}{$\m@th\cdot$}}%
                     \raisebox{-0.3ex}{$\m@th\cdot$}}%
                     =}
      \subjclass[2010]{Primary 32Q65, 57R17; Secondary 53D35}
   \author{Bahar Acu}
      \address{Department of Mathematics, Northwestern University, Evanston, IL 60208}
   \email{baharacu@northwestern.edu} 
   \urladdr{http://www.math.northwestern.edu/~baharacu/}
   \title[The Weinstein Conjecture for Iterated Planar Contact Structures]{The Weinstein Conjecture for Iterated Planar Contact Structures}
   \thanks{The author acknowledges support from the U.S. National Science Foundation. She was partially supported by the NSF grant of Ko Honda (DMS-1406564).}
\begin{document}
\begin{abstract}
 In this paper, we introduce the notions of an iterated planar open book decomposition and an iterated planar Lefschetz fibration and prove the Weinstein conjecture for contact manifolds supporting an open book that has iterated planar pages. For $n\geq 1$, we show that a $(2n+1)$-dimensional contact manifold $M$ supporting an iterated planar open book decomposition satisfies the Weinstein conjecture.
  \end{abstract}
  \maketitle
  
\section{Introduction}\label{intro}
The Weinstein conjecture \cite{We} states that, on a compact contact manifold, any Reeb vector field carries at least one closed orbit. The conjecture was formulated by Alan Weinstein in 1978. Several methods from symplectic geometry have been used to prove the conjecture in many cases. However, it is still not known if those methods can be used to prove the conjecture in all cases.\sm

There are many results that move us towards understanding the Weinstein conjecture in both 3 and higher dimensions such as results of Etnyre \cite{Etn}, Floer, Hofer and Viterbo \cite{FHV}, \cite{H}, \cite{HV}, \cite{Vit}, and also of Taubes \cite{T}. The conjecture was proven for all closed 3-dimensional contact manifolds by Taubes \cite{T} by using Seiberg-Witten theory. However, it is still open in higher dimensions. Yet there are partial results analyzing the conjecture in higher dimensions. In \cite{Vit}, the conjecture was proven for all closed contact hypersurfaces in $\R^{2n}$, $n\geq 2$. In \cite{HV} and \cite{FHV}, the result was extended to cotangent bundles by using the presence of holomorphic spheres and also to a large class of aspherical manifolds, respectively.\sm

In this paper we examine the Weinstein conjecture in the case of a special class of higher dimensional contact manifolds. Briefly, a \textit{Weinstein domain} $(W, \omega=d\lambda)$ is a compact exact symplectic manifold with boundary equipped with a Morse function $\phi: W \to \R$ which has $\partial W=\{\phi= 0\}$ as a regular level set and a Liouville vector field which is gradient like for $\phi$. \sm

We say that a $2n$-dimensional Weinstein domain $(W^{2n}, \omega)$ admits an \textit{iterated planar Lefschetz fibration} if there exists a sequence of symplectic Lefschetz fibrations $f_2, \dots, f_{n}$ where $f_{i}: W^{2i} \to \D$, where $\D$ is a 2-disk and $i=2, \dots, n$, such that each regular fiber of $f_{i+1}$ is the total space of $f_i$ and $f_2: W^4 \to \D$ is a Lefschetz fibration whose fibers are planar surfaces, i.e. $f_2$ is a planar Lefschetz fibration. \sm

For $n\geq 2$, $\ct n$ and $A_k$-singularities given by the variety $$\{(z_1, \dots, z_n) \mid z_1^2+\dots+z_{n-1}^2+z_{n}^{k+1}=1\} \subset (\C^n, \omega_{std})$$ for $n\geq 3$ are two significant classes of Weinstein domains admitting an iterated planar Lefschetz fibration. See Section \ref{sec:IPLF} for more details. \sm

Finally, an \textit{iterated planar contact manifold} is a contact manifold supporting an open book whose pages admit an iterated planar Lefschetz fibration.\sm

In this paper, we prove the following theorem.

   \begin{thrm}\label{main_theorem}
Let $(M, \lambda)$ be a $(2n+1)$-dimensional iterated planar contact manifold. For any $\lambda$ on $M$, the associated Reeb vector field on $M$ admits a closed Reeb orbit.
   \end{thrm}
   
Notice that when $n=1$, $M$ is a planar contact manifold and in that case, the Weinstein conjecture is known to be true \cite{ACH}.

\subsection*{Plan of the paper} 
In Section \ref{sec:backg}, we will establish some standard background and notation. In Section \ref{sec:IPLF}, we will introduce two new notions; iterated planar Lefschetz fibration and its associated open book. Here we also remind the reader the necessary background on Lefschetz fibrations and their compatibility with almost complex structures. The remainder of the paper is then devoted to the proof of the Main Theorem which has three main subsections discussing regularity (Section \ref{subsec:regularity}), compactness (Section \ref{subsec:compactify}), dimension (Section \ref{indexformula}) of the moduli space of planar $J$-holomorphic curves in $\R \times M$ around a singular fiber and away from a singular fiber in the Lefschetz fibration setting by using the iterative argument on Lefschetz fibrations. By using these results, we will argue that the symplectization of $M$ is filled by planar $J$-holomorphic curves away from the binding orbits, i.e. there exists a unique curve, up to algebraic count, passing through every point in $\R \times M$. The rest of the proof is then just an application of a neck-stretching argument that helps us observe the desired closed Reeb orbits.\sm

\subsection*{Acknowledgments} The author thanks her advisor Ko Honda whose support and guidance were central throughout the completion of this project and Chris Wendl for very instructive discussions on Lemma \ref{lemma:wendl} and on the earlier versions of the paper. The author would also like to thank University of California, Los Angeles for their hospitality throughout the completion of this project.

\section{Background}\label{sec:backg}

In this section, we will outline some prerequisite material on symplectic field theory which is essential in the proof of the Main Theorem. \textit{Throughout the paper, all manifolds are smooth oriented and dimension means real dimension unless stated otherwise}. 

\subsection{Weinstein domains}

\begin{definition}
A codimension 1 hypersurface $M$ of a symplectic manifold $(W, \omega)$ is called \textit{contact type} if there is a neighborhood $N$ of $M$ such that $\omega=d\lambda$ for some $\lambda \in \Omega^{1}(N)$ and a vector field $Z$ on $W$ determined by $\lambda=\iota_Z \omega$ positively transverse to $M$.
\end{definition}
Observe that the second condition implies that $\lambda|_{M}$ is a contact form.

\begin{definition} A \textit{Liouville domain} is a triple $(W, \omega, Z)$, where  
\begin{enumerate}
\item $\omega$ is a symplectic form on $W$.
\item $(W, \omega=d\lambda)$ is a compact exact $2n$-dimensional symplectic manifold with boundary.
\item $Z$ is a Liouville vector field on $W$, i.e., $\mathcal{L}_Z \omega=\omega$ and points outward at the boundary.
\end{enumerate}
\end{definition}

A \textit{Liouville form} is a contact form $\lambda=\iota_Z \omega$ defining the Liouville vector field $Z$. In this paper, we are primarily interested in the following class of symplectic manifolds whose boundaries are contact hypersurfaces:

\begin{definition}Let $W$ be a compact $2n$-dimensional manifold with boundary.  A {\em{Weinstein domain structure}} on $W$ is a triple $(\omega, Z, \phi)$, where 
\begin{enumerate}
\item $\omega=d\lambda$ is a symplectic form on $W$.
\item $Z$ is a Liouville vector field defined by $\lambda=\iota_{Z} \omega$.
\item $\phi : W \rightarrow \mathbb{R}$ is a generalized Morse function for which $Z$ is gradient-like. 
\item $\phi$ has $\partial W=\{\phi=0\}$ as a regular level set.
\end{enumerate}
\end{definition}	

The quadruple $(W, \omega, Z, \phi)$ is then called a \emph{Weinstein domain}. 

\begin{notation} Throughout, we will abuse the notation and write $(W, \omega)$ instead of $(W, \omega, Z, \phi)$.
\end{notation}

Observe that any Weinstein domain is a Liouville domain. Also, any Weinstein domain $(W, \omega)$ has the contact type boundary $(\partial W, \xi=\op{ker}\lambda)$, where  $\lambda=\iota_Z \omega$. Moreover, a contact manifold $(M, \xi)$ is called \textit{Weinstein fillable} if it is the boundary of a Weinstein domain $(W, \omega)$.

\begin{definition} Let $(M, \xi= \op{ker} \lambda)$ be a contact manifold. The \textit{symplectization} of $M$ is an open symplectic manifold $(\R \times M, d(e^s \lambda)$ where $s$ is the $\R$-coordinate.
\end{definition}

The contact condition on $M$ implies that $d(e^s \lambda)$ is a symplectic form on the symplectization $\R \times M$. Observe that $\partial_s$ is a Liouville vector field transverse to the slices $\{s\}\times M$. Thus, every contact manifold is a contact type hypersurface in its own symplectization $(\R \times M, d(e^s \lambda))$.

\begin{definition}If $(M, \xi)$ is the boundary of a Weinstein domain $(W, \omega=d\lambda)$, then one can smoothly attach a cylindrical end to define
a larger symplectic manifold 
\begin{equation}
(\widehat{W}, \hat{\omega})=(W, \omega) \cup_{\partial W} ([0, \infty) \times M, d(e^s \lambda)) 
\end{equation}
called the \textit{symplectic completion} of $(W, \omega)$.
\end{definition}

\subsection{Almost complex structures} Let $M$ be a $(2n+1)$-dimensional contact manifold and $\lambda$ be a nondegenerate contact form on $M$. Denote by $R_\lambda$ the Reeb vector field for $\lambda$. 

\begin{definition} Let $W$ be a $2n$-dimensional manifold. An \textit{almost complex structure} is an
endomorphism $J: TW \rightarrow TW$ such that $J^2= -1$. $(W, J)$ is then called an \textit{almost complex manifold}. 
\end{definition}

\begin{definition} \label{defn:compatible}
An almost complex structure $J$ on a symplectic manifold $(W, \omega)$ is said to be \textit{compatible} with $\omega$ (or \textit{$\omega$-compatible}) if
\begin{enumerate}
\item $\omega(Ju, Jv)=\omega(u, v)$, for any $u, v \in TW$ and
\item $\omega(v, Jv) > 0$, for any nonzero $v \in TW$.
\end{enumerate}
\end{definition}

It is known that the set of all almost complex structures on a symplectic manifold $(W, \omega)$ is nonempty and contractible. Therefore, the tangent bundle $TW$ can be considered as a complex vector bundle. 

\begin{definition} Let $(F, j)$ be a Riemann surface and $(W, J)$ be an almost complex manifold. A \textit{$J$-holomorphic curve (or pseudoholomorphic curve)} is a smooth map $u : F \rightarrow W$  such that its differential, at every point, satisfies the following complex-linearity condition:
\begin{equation*}
du \circ j = J \circ du
\end{equation*} 
or, equivalently, $\overline{\partial}_J u=0$.
\end{definition}

We will now define a special class of compatible almost complex structures on $(\R \times M, d(e^s \lambda))$. Since $\R \times M$ is noncompact, an almost complex structure $J$ on $\R \times M$ needs to satisfy certain conditions near infinity in order to obtain a well behaved space of $J$-holomorphic curves. Note that the symplectization of $M$ inherits a natural splitting of the tangent bundle $T(\R \times M)=\R\langle \partial_s\rangle \oplus \R\langle R_\lambda\rangle \oplus \xi$, where $\partial_s$ is the unit vector in the $\R$-direction. We will use this information to make the following definition:

\begin{definition}
An almost complex structure $J$ on $\R \times M$ is called \emph{$\lambda$-compatible} if
 \begin{enumerate}
 \item $J$ is $\R$-invariant.
\item $J(\partial_s)=R_\lambda$ and $J(-R_\lambda)=\partial_s$, where $\partial_s$ is the unit vector in the $\R$-direction.
\item $J(\xi)=\xi$.
\item $J|_{\xi}$ is $d\lambda$-compatible.
 \end{enumerate}
\end{definition}

\begin{notation}We will denote by $\mathcal{J}(\R \times M)$ the space of all $\lambda$-compatible almost complex structures on $\R \times M$.
 \end{notation}
 
 \begin{definition}\label{defn:Mcomp} Let $(M, \xi)$ be a contact type hypersurface in a Weinstein domain $(W, \omega=d\lambda)$. An $\omega$-compatible almost complex structure $J$ on $W$ is called \textit{$(M, \lambda)$-compatible} if $J$ restricts to a $\lambda|_{M}$-compatible almost complex structure on a collar neighborhood of $M$.
 \end{definition}
 
 \begin{figure}[h]
\vspace{-.5cm}
\begin{overpic}[scale=.7]{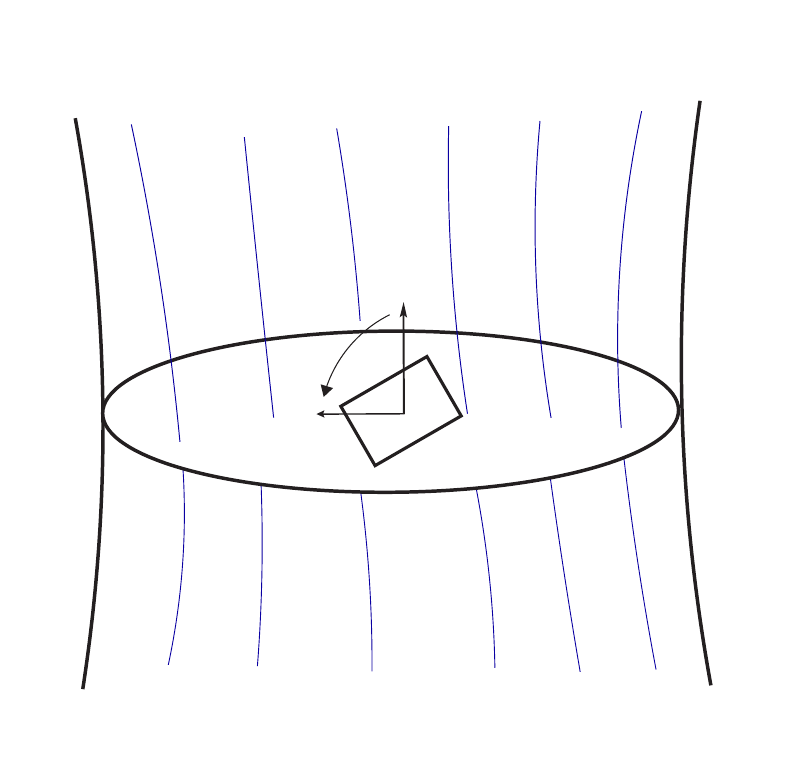}
\put(50,39){$ $}
\put(90, 45){$M$}
\put(55, 40){$\xi$}
\put(35, 45){$R$}
\put(49, 62){$Z$}
\end{overpic}
\vspace{-.5cm}
\caption{Figure depicting the \textit{$(M, \lambda)$-compatible} almost complex structure in a neighborhood of a contact hypersurface $M$. Blue lines represent the flow lines of the Liouville vector field $Z$. Here $R$ is the Reeb vector field for the contact form on $M$.}
 \label{adapted}
\end{figure}

 \begin{definition} A \textit{positive end} (resp., \textit{negative end}) of a $J$-holomorphic curve $u$ is a neighborhood of a puncture on which $u$ is asymptotic to some ${\mathbb R}\times\gamma$ as $s\to\infty$ (resp., $s\to -\infty$), where $\gamma$ is a closed Reeb orbit. The $J$-holomorphic curve $u$ is then called \textit{asymptotically cylindrical}.
\end{definition}

\subsection{Stable Hamiltonian structures}
\begin{definition}A \textit{stable Hamiltonian structure} (SHS) $\mathcal{H}$ on a $(2n+1)$-dimensional manifold $M$ is a pair $(\Omega, \Lambda)$ consisting of a closed 2-form $\Omega$ and 1-form $\Lambda$ defined on $M$ with the following properties:
\begin{enumerate}
\item $\op{ker}\Omega \subset \op{ker}d\Lambda$.
\item $\Lambda \wedge \Omega^{n}>0$.
\end{enumerate}
\end{definition}

Note that the condition $(1)$ can equivalently be written as $d\Lambda=g\omega$ where $g:M \to \R$ is a smooth function. Observe also that the condition $(2)$ is equivalent to $\Omega|_{\xi}$ is nondegenerate, where $\xi \defeq \op{ker}\Lambda$ is a co-oriented hyperplane distribution. In other words, $\Omega|_{\xi}$ is a symplectic vector bundle. Moreover, on a neighborhood $(-\epsilon, \epsilon) \times M$ of $M$, for $\epsilon > 0$ sufficiently small, $d(s\Lambda)+\Omega$ is a symplectic form where $s \in (-\epsilon, \epsilon)$. One can generalize this further to $\R \times M$ by letting the symplectic form on $\R \times M$ to be $\omega_\phi=d(\phi(s)\Lambda)+\Omega$ where $\phi: \R \to (-\epsilon, \epsilon)$ is a strictly increasing function. Observe that the symplectization of $(M, \SH)$ is symplectomorphic to $(\R \times M, \omega_{\phi})$.\sm

\begin{definition} The \textit{Reeb vector field} $R_{\SH}$ of a stable Hamiltonian structure $\SH$ on $M$ is a vector field characterized by
\begin{enumerate}
\item $\Lambda(R_{\SH})=1$,
\item $\Omega(R_{\SH}, \cdot)=0$. 
\end{enumerate}
\end{definition}
Notice that the flow of the Reeb vector field $R_{\SH}$ preserves $\Lambda$, $\Omega$, and $g$, i.e., $\mathcal{L}_{R_{\SH}}\Lambda=0$, $\mathcal{L}_{R_{\SH}}\Omega=0$, and $\mathcal{L}_{R_{\SH}}g=0$.

\begin{definition} Let $M$ be a hypersurface in a symplectic manifold $(W, \omega)$. A \textit{stabilizing vector field} $Z$ on a neighborhood of $M$ is a vector field satisfying the following properties: 
\begin{enumerate}
\item $Z$ is transverse to $M$.
\item The 1-parameter family of hypersurfaces $M_s=\{s\} \times M$, for $s \in (-\epsilon, \epsilon)$, generated by the flow of $Z$ preserves the Reeb vector fields. That is, the Reeb vector field on each slice $M_s=\{s\} \times M$ is independent of $s$.
\end{enumerate}
\end{definition}

\begin{example}
Let $(M, \xi=\op{ker}(\lambda))$ be a contact hypersurface of a symplectic manifold $(W, \omega)$ then $\mathcal{H}=(d\lambda, \lambda)$ is a stable Hamiltonian structure in which $R_{\SH}$ is the Reeb vector field in the usual sense, the Liouville vector field $Z$ transverse to $M$ is a stabilizing vector field and $g=1$.
 \end{example}

Recall that $\Omega|_{\xi}$ is symplectic. One can then define $\Omega$-compatibility of an almost complex structure $J$ on $\R \times M$ after controlling the (end) behavior of $J$-holomorphic curves in $\R \times M$ near infinity. 

\begin{definition} An almost complex structure $J$ on $\R \times M $ is called \textit{$\Omega$-compatible} if
\begin{enumerate}
\item $J$ is $\R$-invariant.
\item  $J(\partial_s)=R_{\SH}$ and $J(-R_{\SH})=\partial_s$, where $\partial_s$ is the unit vector in the $\R$-direction.
\item $J(\xi)=\xi$.
\item $g(u, v)=\Omega(u, Jv)$ is a Riemannian metric on $\R \times M$, where $u, v \in \xi$, i.e., $J|_{\xi}$ is $\Omega$-compatible.
\end{enumerate}
\end{definition}

\begin{notation} We will denote by $\mathcal{J}(\mathcal{H})$ the space of all $\Omega$-compatible almost complex structures $J$ on $[0, \infty) \times M$.
\end{notation}

\subsection{Neck stretching} \label{NS} In this subsection we describe neck-stretching, which is a technique originating in symplectic field theory (SFT), also known as the splitting of a symplectic manifold $W$ along a contact hypersurface $M$ by deforming the almost complex structure on $W$ in some neighborhood of $M$. See \cite{BEHWZ} and \cite{EGH} for more details.\sm

Let $(M, \xi)$ be a contact type hypersurface of an exact symplectic manifold $(W, \omega=d\lambda)$ and $Z$ be a Liouville vector field positively transverse to $M$. Suppose that $M$ divides $W$ into two parts $W^+$ and $W^-$ with two boundary components $M^{+}$ and $M^-$, respectively, so that $Z$ points outwards along $M^+$ and inwards along $M^-$. \sm
 
Let $J$ be an $\omega$-compatible almost complex structure on $(W, \omega)$ which is also \textit{$(M, \lambda)$-compatible} and $Z$ be a Liouville vector field on $W$.  We wish to construct a family of $\omega$-compatible almost complex structures on $W$ as follows:\sm

Let $I_t=[-t-\epsilon, t+\epsilon]$. Denote by $\mathcal{X}_s$ the flow of the Liouville vector field $Z$. Define a diffeomorphism $\varphi_t$ as follows:
\begin{align*}
\varphi_t:I_t \times M &\rightarrow W,\\
(s, m) &\mapsto \mathcal{X}_{\beta(s)}(m),
\end{align*}
where $\beta: I_t\rightarrow [-\epsilon, \epsilon]$ is a strictly increasing function chosen in such a way that it is close to being the identity map near $s=0$. \sm

Assume that $I_t \times M$ is equipped with the symplectic form $\omega_{\beta}=d(e^{\beta(s)}\lambda)=\varphi^{*}_t (\omega)$. Let $\tilde{J}_t$ be an almost complex structure on $I_t \times M$ such that $\tilde{J}_t|_{\xi}=J|_{\xi}$.\sm

 Assume also that $\tilde{J}_t$ is invariant in the Liouville direction. One can then form
\begin{equation*}
 (I_t \times M, \tilde{J}_t) \cup_{\varphi_t} (W- \varphi_t ({I_t \times M}), J)
\end{equation*}
by gluing via the diffeomorphism $\varphi_t$. The resulting manifold is then symplectomorphic to $(W, J_t)$ where $J_t$ is the new $\omega$-compatible almost complex structure on $W$.

\begin{definition} \label{defn:NSofM}
The family $\{J_t\}_{t=0}^{\infty}$ of $\omega$-compatible almost complex structures on $W$ is called the \textit{$(M, \lambda)$-neck-stretching} of $J_0$ along a neighborhood of $M$, where $J_0$ is $(M, \lambda)$-compatible. 
\end{definition}

\begin{figure}[h]
\vspace{-.3cm}
\begin{overpic}[scale=1]{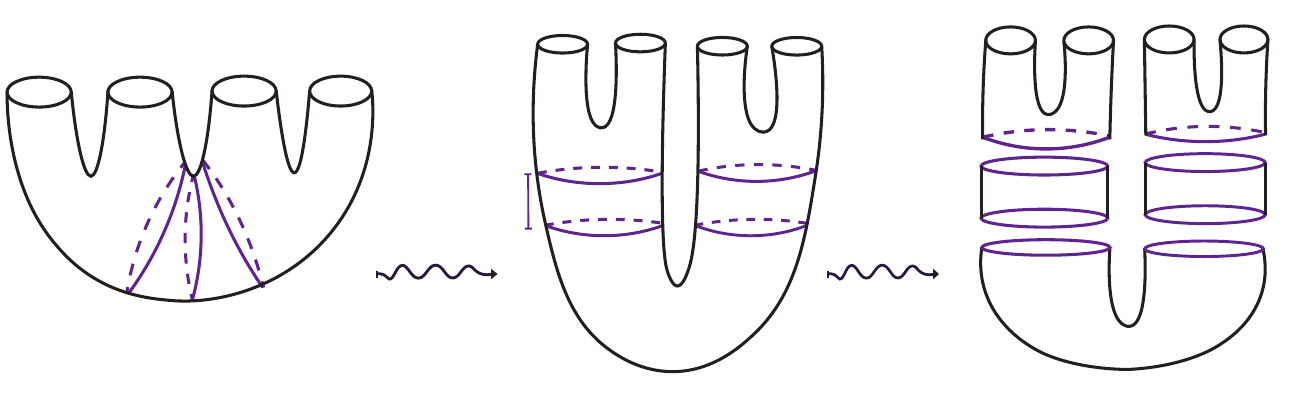}
\put(98, 24){$\widebar{W}$}
\put(97, 7){$\ct n$}
\put(0, 10){$W$}
\put(7, 5){$M$}
\put(36, 14){$M_t$}
\put(97, 15.5){$M_{\infty}$}
\put(63.5, 6.5){$t \rightarrow \infty$}
\end{overpic}
 \label{stretching}
 \caption{The illustration of the stretching of a sequence of almost complex structures along a neighborhood of a contact hypersurface $M$ until it breaks the manifold into 3 noncompact pieces $\widebar{W}$, $M_{\infty}=\R \times M$ and $\ct n$. Here $M_t$ denotes $I_t \times M$.}
\end{figure}

\subsection{Moduli spaces} \label{modulisetup} Let $(W, \omega=d\lambda)$ be a compact Liouville cobordism from $M_{+}$  to $M_{-}$. Here $M_{\pm}$ denotes the positive and negative boundarie of $W$, i.e., the Liouville vector field points outward at $M_+$ and inward at $M_-$. Let $R^{\pm}$ be the Reeb vector field for $\lambda_{\pm}=\lambda|_{M_{\pm}}$. 

The symplectic completion $(\widehat{W}, \hat{\omega})$ of $(W, \omega)$ is then the following noncompact symplectic manifold defined by attaching cylindrical ends to some collar neighborhood of $M_{\pm}$:
\begin{equation}
(\widehat{W}, \hat{\omega})=((-\infty, 0]\times M_{-}, d(e^s\lambda)) \cup_{M_{-}} (W, \omega) \cup_{M_{+}} ([0, \infty) \times M_{+}, d(e^s\lambda))
\end{equation}

\begin{definition}
An almost complex structure $J$ on $(\widehat{W}, \hat{\omega})$ is called \emph{$\hat{\omega}$-compatible} if the following conditions hold:
\begin{enumerate}
\item On the ends $([0, \infty) \times M_{+}, d(e^s \lambda))$ and $(-\infty, 0] \times M_{-}, d(e^s \lambda))$,
\begin{enumerate}
\item $J$ is $\R$-invariant.
\item $J(\partial_s)=R_{\pm}$ and $J(-R_{\pm})=\partial_s$, where $\partial_s$ is the unit vector in the $\R$-direction.
\item $J(\xi)=\xi$.
\item $J|_{\xi}$ is compatible with $d\lambda$.
\end{enumerate}
\item On $W$, $J$ is $\omega$-compatible, i.e., $g(\cdot, \cdot)=\omega(\cdot, J\cdot)$ is a Riemannian metric.
\end{enumerate}
\end{definition}

\begin{notation}We will denote by $\mathcal{J}(\widehat{W})$ the set of all $\hat{\omega}$-compatible almost complex structures on $\widehat{W}$.
\end{notation}

We can now make the following definition:

\begin{definition}\label{definition: modulispace} 
Fix nonnegative integers $k_+$ and $k_-$. Let $\gamma^{\pm}=(\gamma_1^{\pm}, \dots, \gamma_{k_{\pm}}^{\pm})$ be ordered sets of Reeb orbits of $R^{\pm}$ and let $A$ be a relative homology class in $H_2(W, {\gamma}^+ \cup {\gamma}^{-})$ where, by abuse of notation, $\gamma^{\pm}$ are viewed as subsets of $M_{\pm}$. \sm

The \textit{moduli space of $\hat{\omega}$-compatible $J$-holomorphic curves}, with $m$ marked points, homologous to $A$ and asymptotic to $(\gamma^{+}, \gamma^{-})$ in $\widehat{W}$ is a family of equivalence classes of tuples
\begin{equation*}
\mathcal{M}_{\widehat{W}, m}(J, A, \gamma^{\pm})=\{(u: (\dot{F}, j) \rightarrow (\widehat{W}, J), (z_1, \dots, z_m)) \mid  \bar{\partial}_Ju=0, u_{*}[\dot{F}]=A, z_i \in \dot{F} \} \ /{ \sim}
\end{equation*}
 with the following properties:
 \begin{enumerate}
 \item $F$ is a closed genus 0 surface.
 \item $\Gamma^{+}=(z_1^{+}, \dots, {z_{k_{+}}^{+}})$ and $\Gamma^{-}=(z_1^{-}, \dots, z_{k_{-}}^{-})$ are disjoint and ordered sets of positive and negative punctures in $F$.
 \item $\dot{F}\defeq F$ $\setminus$ $(\Gamma^+ \cup \Gamma^{-})$ is a genus 0 surface with $k$ punctures where $k=k_{+}+k_{-}$. 
\item $u: (\dot{F}, j) \rightarrow (\widehat{W}, J)$ is an asymptotically cylindrical $J$-holomorphic curve, i.e., $u$ is asymptotic to $\R \times \gamma_i^{\pm}$ near each $z_i^{\pm} \in \Gamma$ for $i=1, \dots, k_{\pm}$ and $du \circ j = J \circ du$.
 \item  $u: (\dot{F}, j) \rightarrow (\widehat{W}, J)$ represents the relative homology class $A \in H_2(W, \tilde{\gamma}^+ \cup \tilde{\gamma}^{-})$.
 \end{enumerate}
\end{definition}

Here the equivalence is given by $$(u, (z_1, \dots, z_m)) \sim (u \circ \phi, (\phi^{-1}(z_1), \dots, \phi^{-1}(z_m)))$$ where $\phi$ is an automorphism of $(\dot{F}, j)$ taking $\Gamma^+$ to $\Gamma^+$ and $\Gamma^-$ to $\Gamma^-$.

We explain what it means for $u: (\dot{F}, j) \rightarrow (\widehat{W}, J)$ to represent $A$.\sm

Let $\bar{F}$ be the compact surface with boundary which is obtained from $\dot{F}$ by adding $\{\pm \infty\} \times S^1$ to each cylindrical end. Let $r: \widehat{W} \rightarrow W$ be the retraction onto $W$ such that $r|_{W}$ is the identity map and $r(s, m)=m \in M_{\pm}$ when $(s, m) \in (-\infty, 0] \times M_{-}$ or $[0, \infty) \times M_{+}$. Then $r \circ u: \dot{F} \rightarrow W$ has a continuous extension $\bar{u}:(\bar{F}, \partial{\bar{F}})\rightarrow (W, \tilde{\gamma}^+ \cup \tilde{\gamma}^{-})$ and by $u$ representing a nonzero relative homology class $A=[u] \in H_2(W, \tilde{\gamma}^+ \cup \tilde{\gamma}^{-})$, we mean $\bar{u}$ representing $A$ whose image under
\begin{equation*}
H_2(W, \tilde{\gamma}^+ \cup \tilde{\gamma}^{-}) \stackrel{\partial}\longrightarrow H_1(\tilde{\gamma}^+ \cup \tilde{\gamma}^{-})
\end{equation*}
is given as
\begin{equation*}
\partial A= \sum \limits_{i=1}^{k_+} [\gamma _i^+]+\sum \limits_{i=1}^{k_-} [\gamma _i^-] \in H_1(\tilde{\gamma}^+ \cup \tilde{\gamma}^{-}).
\end{equation*}

The moduli space $\mathcal{M}_{\widehat{W}, m}(J, A, \gamma^{\pm})$ then comes with a natural map that records the evaluation of the curves at the marked points.

\begin{definition}The\textit{ evaluation map} of $\mathcal{M}_{\widehat{W}, m}(J, A, \gamma^{\pm})$ is the map
\begin{align*}
\text{ev}: \mathcal{M}_{\widehat{W}, m}(J, A, \gamma^{\pm}) &\rightarrow \widehat{W}^m,\\
(u, (z_1, \dots, z_m))&\mapsto (u(z_1), \dots, u(z_m)).
\end{align*}
\end{definition}

The evaluation map encodes relations between the topology of $\widehat{W}$ and the structure of the moduli space of curves in $\widehat{W}$. The degree of evaluation map is an algebraic count of the number of $J$-holomorphic curves with a marked point passing through a generic point in $\widehat{W}$ that carry the relative homology class $A$. \sm

In dimension 4, there are particularly powerful results regarding the intersection properties of $J$-holomorphic curves.

\begin{thm}\cite[Theorem 2.6.3, Positivity of intersections]{MS} $(X, J)$ Let $X$ be a 4-dimensional almost complex manifold and $u_1$ and  $u_2$ be two simple $J$-holomorphic curves in $X$ representing the homology classes $A_1$, $A_2 \in H_2(X)$. Each intersection of the images contributes a positive integer to the homological intersection $A_1 \cdot A_2$ of the two curves and that number is 1 if and only if the intersection is transverse.
\end{thm}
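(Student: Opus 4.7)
The plan is to reduce the global claim to a purely local analysis at each intersection point $p \in \operatorname{Im}(u_1) \cap \operatorname{Im}(u_2)$. First, since $u_1$ and $u_2$ are simple with distinct images, a standard unique continuation argument for the nonlinear Cauchy--Riemann equation, applied in local charts around any accumulation point of the intersection set, forces the intersection locus to be discrete; combined with properness it is therefore finite on any bounded region. So it suffices to show that the local intersection index at each $p$ is a positive integer, equal to $1$ precisely in the transverse case.

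For a single intersection point $p$, I would choose local $J$-complex coordinates $(w_1,w_2)$ on a neighborhood $U$ of $p$ in $X$ such that $J(p)=J_0$, the standard complex structure, and reparametrize the domains of $u_1$ and $u_2$ so that both curves pass through the origin. The key analytic input is the Micallef--White normal form theorem: there exist local $C^1$ diffeomorphisms of $U$ under which each $J$-holomorphic curve pulls back to a genuine $J_0$-holomorphic curve, and the diffeomorphisms can be arranged so that the local intersection contribution of $u_1$ and $u_2$ agrees with that of their holomorphic models. This step relies on a careful Hartman--Wintner-type asymptotic analysis of solutions of the nonlinear $\bar{\partial}_J$ equation near a branch or tangency point, followed by a controlled comparison of tangent cones under iterated blow-ups.

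Once both curves are genuinely holomorphic in $U$, one invokes the classical algebraic geometry of plane curve singularities: the local intersection multiplicity of two distinct irreducible holomorphic curves through $0 \in \mathbb{C}^2$ is $\dim_{\mathbb{C}} \mathcal{O}_0 / (f_1,f_2)$, where $f_i$ is a local defining function, and this is a strictly positive integer which equals $1$ exactly when the two curves are transverse at $0$. Summing these local multiplicities over the finite intersection set and comparing with the topological number $A_1 \cdot A_2$, obtained by any smooth transverse perturbation, yields the desired equality.

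The main obstacle is the Micallef--White normal form: the unique continuation step is routine and the final bookkeeping is classical complex algebraic geometry, but without a good local holomorphic model the very notion of a local intersection multiplicity, let alone its positivity, is not at hand. This is where the real analytic weight of the theorem lies, and it is the step I would expect to occupy the bulk of a careful write-up.
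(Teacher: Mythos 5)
This statement is quoted in the paper as a citation to McDuff--Salamon; the paper gives no proof of its own. Your sketch reproduces the standard proof of positivity of intersections for $J$-holomorphic curves in dimension four: unique continuation to isolate the intersection set, the Micallef--White normal form to reduce to honest holomorphic models in local coordinates, and then classical local intersection theory of plane curve germs (multiplicity $= \dim_{\mathbb{C}} \mathcal{O}_0/(f_1,f_2) \geq 1$, with equality iff transverse). This is exactly the line of argument in McDuff--Salamon and in the Micallef--White and related expositions, so your proposal is correct and matches the reference the paper points to; there is no alternate argument in the paper to compare against. Two small points worth stating explicitly in a careful write-up: the curves are assumed closed (they represent classes in $H_2(X)$), so discreteness of the intersection locus immediately gives finiteness without any boundedness caveat; and you should record the standing hypothesis that $u_1$ and $u_2$ have distinct images (or, more precisely, that neither is a reparametrization of the other), since otherwise the intersection set is not discrete and the local multiplicity count does not apply.
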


In particular, the homological intersection $A_1 \cdot A_2$ is always nonnegative. If it is 0, then $u_1$ and $u_2$ are disjoint $J$-holomorphic curves. We also note that the positivity of intersections is a local result and applies to punctured $J$-holomorphic curves in the symplectization.

\begin{thm}\cite[Theorem 2, Automatic transversality]{W4} Let u be an embedded punctured $J$-holomorphic curve in the symplectization $(\R \times M, J)$ of a 3-manifold $M$ equipped with a stable Hamiltonian structure and generic almost complex structure $J$. Then every curve in the unparametrized moduli space  $\widetilde{\mathcal{M}}_{\R \times M, m}(J, A, \gamma^{\pm})$ is embedded.
\end{thm}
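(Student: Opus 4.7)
My plan is to combine the punctured adjunction formula of Siefring--Wendl with positivity of intersections in dimension four. For any somewhere-injective punctured $J$-holomorphic curve $v : \dot F \to \R \times M$ into a four-dimensional symplectization, the relative adjunction formula expresses
\[
2\,\delta(v) + 2\,\bar\sigma_{N}(v) \;=\; [v]\cdot[v] - \chi(\dot F) + \mu_{CZ}^{\tau}(\gamma^{\pm}) - 2\,c_{1}^{\tau}\bigl(v^{*}T(\R\times M)\bigr),
\]
where $\delta(v)\ge 0$ counts double points and critical points of $v$, and $\bar\sigma_{N}(v)\ge 0$ is the asymptotic self-intersection defect at the punctures. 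All terms on the right hand side depend only on the relative homology class $A$, the asymptotic orbits $\gamma^{\pm}$, the fixed genus zero of $\dot F$, and a choice of trivialization $\tau$; none of them depends on the particular curve $v$. Consequently, the quantity $\delta(v) + \bar\sigma_{N}(v)$ is constant on the moduli space $\widetilde{\mathcal{M}}_{\R\times M,\,m}(J,A,\gamma^{\pm})$.

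The hypothesis supplies one embedded curve $u$, so $\delta(u) = 0$. Combining the embeddedness of $u$ with the asymptotic winding estimates of Hofer--Wysocki--Zehnder at each puncture forces $\bar\sigma_{N}(u) = 0$ as well, so the constant above is zero. Every other somewhere-injective $v$ in the same moduli space must therefore satisfy $\delta(v) = \bar\sigma_{N}(v) = 0$, which is precisely the statement that $v$ is embedded.

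To finish I would rule out multiply covered members of the moduli space. For generic $\Omega$-compatible (or $\lambda$-compatible) $J$, the Sard--Smale machinery guarantees that the stratum of simple curves is cut out transversely with the expected virtual dimension; in four dimensions one can additionally appeal to the automatic transversality criterion of Wendl, whose similarity-principle analysis of the normal Cauchy--Riemann operator $D_{u}^{N}$ on the complex line bundle $N_{u} \to \dot F$ shows that for embedded $u$ with sufficiently positive normal index the operator is surjective automatically. Any non-simple $v$ would factor through a simple curve representing a proper divisor of $(A,\gamma^{\pm})$, and a direct dimension/energy comparison against the given embedded $u$ (which realizes $(A,\gamma^{\pm})$ with multiplicity one at each end) shows that for generic $J$ no such covers can coexist with $u$ in $\widetilde{\mathcal M}_{\R\times M,\,m}(J,A,\gamma^{\pm})$.

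The main obstacle I anticipate is handling the asymptotic defect $\bar\sigma_{N}$. While $\delta$ has a transparent geometric meaning, $\bar\sigma_{N}$ is defined using Siefring's relative asymptotic operators at the ends and requires the delicate winding-number bookkeeping supplied by the eigenvalue theory of asymptotic operators. Verifying that $\bar\sigma_{N}(u) = 0$ for the given embedded curve --- so that the vanishing of the topological sum really does propagate to $\delta(v) = 0$ for every other $v$ --- is the technical heart of the argument, but all of the estimates needed are encoded in the generic nondegeneracy hypotheses built into the setup and the Hofer--Wysocki--Zehnder / Siefring asymptotic theory.
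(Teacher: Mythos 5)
The paper cites this result from Wendl's [W4] and gives no proof of its own, so there is no internal argument to compare your attempt against. On its own merits, your adjunction-plus-positivity strategy is the standard and correct framework for embeddedness-persistence in a four-dimensional symplectization: the sum $\delta(v) + \bar\sigma_N(v)$ is determined by $(A, \gamma^{\pm})$ and the topology of $\dot F$, so if it vanishes for one simple curve it vanishes for every simple curve in the moduli space.

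The genuine gap is exactly the step you flag but do not close: $\bar\sigma_N(u) = 0$ does \emph{not} follow from $u$ being embedded. Embeddedness yields only $\delta(u) = 0$. The asymptotic defect counts potential ``hidden'' intersections at the cylindrical ends, which are governed by the winding numbers of the asymptotic eigenfunctions of the linearized Reeb flow and can be strictly positive for an embedded curve. If $\bar\sigma_N(u) > 0$, your topological invariant is strictly positive and some other $v$ in the moduli space could realize it with $\delta(v) > 0$, defeating the conclusion. The HWZ/Siefring winding estimates you invoke only bound the asymptotic winding by $\lfloor \mu_{CZ}/2 \rfloor$; forcing equality at every puncture (which is what $\bar\sigma_N(u) = 0$ requires) needs a separate quantitative argument, typically involving the normal Chern number $c_N(u)$ and an index comparison --- precisely the input of Wendl's automatic transversality criterion. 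Genericity of $J$ alone does not give this, since it controls the dimension of the moduli space but not the asymptotic defect of an individual curve. A secondary point: your argument excluding multiple covers conflates automatic transversality (surjectivity of the linearized operator) with non-existence of covers; the actual mechanism is that a $k$-fold cover with $k \geq 2$ in this moduli space would force $A$ and each $\gamma^{\pm}$ to be $k$-divisible, which the presence of the simple embedded $u$ representing the same data rules out.
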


\section{Iterated planar open book decompositions} \label{sec:IPLF} In this section we introduce the notions of an iterated planar Lefschetz fibration, an iterated planar open book decomposition and related necessary background to introduce these two notions.
\subsection{Open book decompositions} 
\begin{definition} \label{definition: AOBD}
An \textit{abstract open book decomposition} is a pair $(F, \Phi)$, where:
\begin{enumerate}
\item $F$ is a compact $2n$-dimensional manifold with boundary, called the \textit{page} and
\item $\Phi: F \rightarrow F$ is a diffeomorphism preserving $\partial F$, called the \emph{monodromy}.
\end{enumerate} 
\end{definition}

\begin{definition}An \emph{open book decomposition} of a compact $(2n+1)$-dimensional manifold $M$ is a pair $(B,\pi)$, where:
\begin{enumerate}
\item $B$ is a codimension 2 submanifold of $M$ with trivial normal bundle, called the \emph{binding} of the open book.
\item $\pi: M - B \rightarrow S^{1}$ is a fiber bundle of the complement of $B$ and the fiber bundle $\pi$ restricted to a neighborhood $B \times \D$ of $B$ agrees with the angular coordinate $\theta$ on the normal disk $\D$.
\end{enumerate}
\end{definition}

The preimage $F_{\theta}:= \overline{\pi^{-1}(\theta)}$, for $\theta \in S^1$, gives a $2n$-dimensional manifold with boundary $\partial F_{\theta}= B$ called the \emph{page} of the open book. The holonomy of the fiber bundle $\pi$ determines a conjugacy class in the orientation-preserving diffeomorphism group of a page $F_{\theta}$ fixing its boundary, i.e., in $\mbox{Diff}^{+}(F_{\theta}, \partial F_{\theta})$ which we call the {\em{monodromy}}.\sm

Since $M$ is oriented, pages are naturally co-oriented by the canonical orientation of $S^1$ and hence are naturally oriented. Also, the binding inherits an orientation from the open book decomposition of $M$. We assume that the given orientation on $B$ coincides with the boundary orientation induced by the pages. \sm

By using this description, one can construct a closed $(2n+1)$-dimensional manifold $M$ from an abstract open book in the following way: Consider the mapping torus 
\begin{equation*}
F_{\Phi}=[0,1] \times F \bigm/ (0, \Phi(z)) \sim (1, z). 
\end{equation*}

The boundary of $F_{\Phi}$ is then given by
\begin{equation*}
\partial F_{\Phi}=[0, 1] \times \partial F \bigm/ (0, z) \sim (1, z),
\end{equation*}
since $\Phi(z)=z$ on $\partial F$. Let $|\partial F_{\Phi}|$ denote the number of boundary components of $F_{\Phi}$. We set
\begin{equation*}
M_{(F, \Phi)}= F_{\Phi}\cup \left( \bigsqcup_{|\partial F_{\Phi}|} \mathbb{D} \times \partial F \right) 
\end{equation*}
via the following identification:
\begin{equation*}
\left((\theta, p) \in \partial F_{\Phi}\right) \sim \left((\theta, p) \in \partial \D \times \partial F\right).
\end{equation*}

Here  the boundary of each disk $S^1 \times \{pt\}$ in $\mathbb{D} \times \partial F$ gets glued to $\{pt\} \times S^1$ in the mapping torus. The abstract open book $(F, \Phi)$ is then an open book decomposition of a closed $(2n+1)$-dimensional manifold $M$ if $M_{(F, \Phi)}$ is diffeomorphic to $M$. \smallskip

The mapping torus $F_{\Phi}$ carries the structure of a natural fibration $F_{\Phi} \rightarrow S^1$, away from $\partial F$, whose fiber is the interior of the page $F$ of the open book decomposition. 

\begin{definition}
 A contact structure $\xi$ on a compact manifold $M$ is said to be \emph{supported by an open book} $(B,\pi)$ of $M$ if it is the kernel of a contact form $\lambda$ satisfying the following:
\begin{enumerate}
\item $\lambda$ is a positive contact form on the binding and
\item $d\lambda$ is positively symplectic on each fiber of $\pi$.
\end{enumerate}
If these two conditions hold, then the open book $(B,\pi)$ is called a \emph{supporting open book} for the contact manifold $(M, \xi)$ and the contact form $\lambda$ is said to be \emph{adapted} to the open book $(B,\pi)$. Furthermore, the contact form $\lambda$ is called the \textit{Giroux form}.
\end{definition}

 \subsection{Iterated planar Lefschetz fibrations}
 
\begin{definition} A \emph{topological Lefschetz fibration} is a smooth map $f: W \rightarrow \D$, where $W$ is a compact manifold of dimension $2n$ with corners and $\D$ is a 2-disk, with the following properties:
\begin{enumerate}
\item The critical points of $f$ are isolated, nondegenerate, and are in the interior of $W$.
\item If $p\in W$ is a critical point of $f$, then there are local complex coordinates $(z_{1},\dots , z_{n})$ about $p=(0, \dots, 0)$ on $W$ and $z$ about $f(p)$ on $\D$ such that, with respect to these coordinates, $f$ is given by the complex map $z = f(z_1, \dots, z_n) = z_{1}^{2} +\dots +z_{n}^{2}$.
\item There exists a decomposition of $\partial W$ into horizontal and vertical parts $\partial_h W$ and $\partial_v W$, respectively, which meet at a codimension 2 corner and $\partial_v W=f^{-1}(\partial \D)$ and $\partial_h W=\cup_{z\in \D} \partial F_z$. Here $\partial_h W$ is the boundary of all fibers including singular ones.
\end{enumerate}
\end{definition}
Denoting the critical points by $p_1, \dots, p_r \in W$ and the corresponding critical values by $c_1, \dots, c_r \in \D$, $r\geq 1$, for each $z \in \D \backslash \{c_1, \dots, c_r \}$, $F_{z} = f^{-1}(z)$ is a $(2n-2)$-dimensional submanifold of $W$ with nonempty boundary.\sm

\begin{figure}[h]
\vspace{-.7cm}
\begin{overpic}[scale=.7]{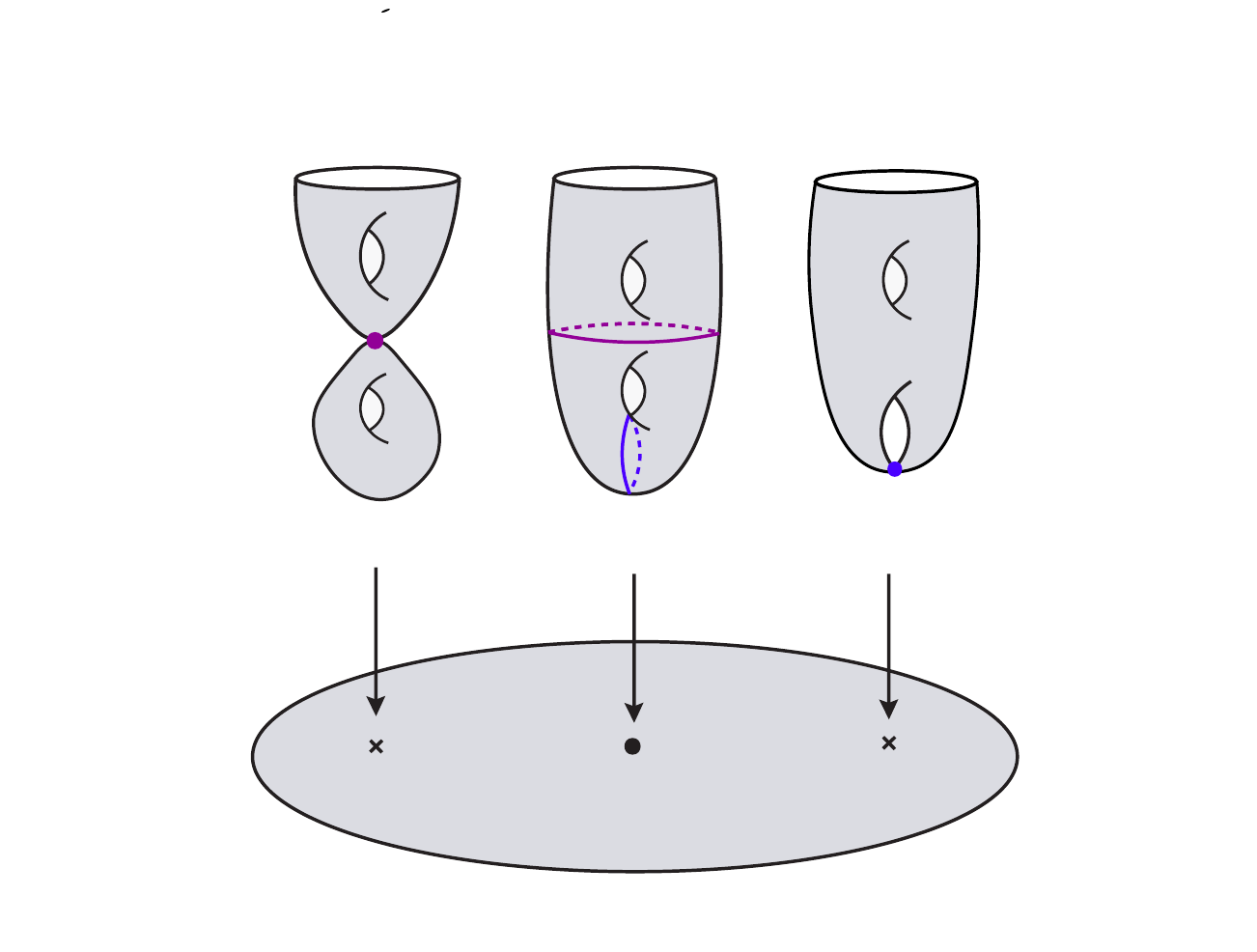}
\put(50,39){$ $}
\put(13, 40){$W$}
\put(13, 15){$\D$}
\put(28, 12){$c_1$}
\put(49.5, 11.5){$z$}
\put(69, 12){$c_2$}
\end{overpic}
\vspace{-.4cm}
\caption{The figure representing the Lefschetz fibration $f: W \to \D$ with critical values $c_1$ and $c_2$. Here the purple and blue curves represent vanishing cycles of the regular fiber over $z$. The fibers with purple and blue dots represent singular fibers.}
 \label{figure: LeF}
\end{figure}

In order to talk about an open book decomposition induced by the boundary restriction of the Lefschetz fibration $f$, one should smooth the corners of $\partial W$ to begin with. The Lefschetz fibration $f$ on $W$ will then induce an open book decomposition on $\partial W$. \sm

We shall now describe the compatibility conditions between a topological Lefschetz fibration $f: W \rightarrow \D$ and a symplectic form $\omega$ on $W$ as given in \cite{KS}. 

\begin{definition} A \emph{symplectic Lefschetz fibration} is a pair $(f: W \rightarrow \D, \omega=d\lambda)$, where \begin{enumerate}
\item $f$ is a topological Lefschetz fibration.
\item $\omega=d\lambda$ is an exact symplectic form on $W$ such that $(\partial W, \lambda|_{\partial W})$ is a contact manifold.
\item Each regular fiber $F_z$, $z\in \D \setminus \{c_1, \dots, c_r\}$, carries the structure of an exact symplectic manifold with contact type boundary. 
\item Let $F_{z_0}$ be a reference fiber and denote by $\lambda|_{\partial F_{z_0}}$ the pullback from $\partial F_{z_0}$ to $\D \times ([0, 1] \times \partial F_{z_0})$. Then, on a neighborhood $([0, 1] \times \partial (\partial_v W)) \times \partial_h W$ of $\partial_h W$, the contact form $\lambda=\lambda|_{\partial F_{z_0}}+Kf^{*}(\sigma)$, where $\sigma$ is the standard Liouville form on $\D$ and $K$ is a sufficiently large constant.
\end{enumerate}
\end{definition}

Note that the condition (4) in the definition above implies that $\omega=d\lambda$ is an exact symplectic form and the vector field defined by $\lambda$ is transverse to $\partial W$ and points outwards (see McLean  \cite[Theorem 2.15]{M}). Therefore, one can smooth the corners of $\partial W$ to obtain a symplectic deformation of $W$.\sm

We will next examine the compatibility of an almost complex structure on with a symplectic Lefschetz fibration. Given a symplectic Lefschetz fibration $(f: W \rightarrow \D, \omega=d\lambda)$. Fix a complex structure $j$ on $\D$. We can then make the following definition:

\begin{definition}An \textit{almost complex structure $J$ on $W$ is compatible with a symplectic Lefschetz fibration} $(f: W \to \D, \omega=d\lambda)$ if 
\begin{enumerate}
\item $f$ is $(J, j)$-holomorphic.
\item $J=J_0$ and $j=j_0$ near critical points of $W$, where $J_0$ and $j_0$ are the standard almost complex and complex structures on $\C ^2$ and $\C$, respectively.
\item In a neighborhood of $\partial_h W$, $J$ is a product almost complex structure.
\item In a neighborhood of $\partial_v W$, $J$ is invariant in the radial direction.
\item $J|_{T_hW}$ is $\omega$-compatible, where $T_hW$ is the horizontal component of the tangent bundle $TW$.
\end{enumerate}
\end{definition}
Note that the matrix for $J$ never achieves to be antisymmetric. Therefore, $J$ is not $\omega$-compatible. However, $J$ is tamed by $\omega$ (see Seidel \cite[Lemma 2.1]{PS}). Denote by $\mathcal{J}_f(W)$ the space of almost complex structures compatible with a symplectic Lefschetz fibration. Moreover, it is known that $\mathcal{J}_f(W)$ is nonempty and contractible. 

\begin{definition}
A \emph{completion of the symplectic Lefschetz fibration} $(f: W \to \D, \omega=d\lambda)$ is a pair $(\hat{f}: \widehat{W}\rightarrow \C, \widehat{\omega}=d\widehat{\lambda})$, where 
\begin{enumerate}
\item The completion of a neighborhood of $\partial_h W$ is defined by $\widehat{W}_v \defeq  ([1, \infty) \times \partial_h W) \cup W$.
\item The contact form $\lambda|_{\partial F_{z}}$ extends over the cylindrical end $[1, \infty) \times \partial_h W$ to $s \cdot \lambda|_{\partial F_z}$ where $s$ denotes the $\R^{+}$-direction.
\item Denote by $\partial_v(\widehat{W}_v)$ the vertical boundary of $\widehat{W}_v$. The completion of $W$ is defined by $\widehat{W}^= \widehat{W}_v \cup ( \partial_v(\widehat{W} _v) \times [1, \infty))$.
\item The extension of $f$ over $Y \defeq  \partial_v(\widehat{W}_v) \times [0, \infty)$ is $\hat{f}|_{Y}(v, t)=\hat{f}|_{ \partial_v(\widehat{W}_v)}(v)$.
\item The contact form $\widehat{\lambda}={\lambda}|_{\hat{F}} + K\hat{f}^{*}(\widehat{\sigma})$, where ${\lambda}|_{\hat{F}}$ is the pullback from $\partial F$ to $\D \times ([1, \infty) \times \partial F)$,  $\widehat{\sigma}$ is the standard Liouville form on $\C$, and $K$ is a sufficiently large constant. 
\end{enumerate}
\end{definition}

Now we will turn our attention to the almost complex structures on the completion of a symplectic Lefschetz fibration. Let $J$ be a compatible almost complex structure on $\widehat{W}$ and $j$ be the standard complex structure on $\C$.

\begin{definition} \label{definition:symp_comp_acs}
An \textit{almost complex structure $J$ on $ \widehat{W}$ is compatible with} $(\hat{f}: \widehat{W} \rightarrow \C, \widehat{\omega})$ if the following conditions hold:
\begin{enumerate}
\item $d\hat{f} \circ J= j \circ d{\hat{f}}$.
\item On $([1, \infty) \times \partial F) \times \D$, $J$ is a product almost complex structure.
\item On $\partial_v W \times [1, \infty)$, $J$ is invariant in the radial direction.
\item $J$ is $\lambda$-compatible on the completion of a neighborhood of $\partial_h W$.
\item On $W$, $J$ is $\omega$-compatible, i.e., $g(\cdot, \cdot)=\omega(\cdot, J\cdot)$ is a Riemannian metric.
\end{enumerate}
\end{definition}

Denote by $\mathcal{J}^h(\widehat{W})$ the space of all almost complex structures compatible with $\hat{f}$. It is known that $\mathcal{J}^h(\widehat{W})$ is nonempty and contractible (c.f. \cite[Section 2.2]{PS}).

\begin{definition}
A Lefschetz fibration $f: W \rightarrow \D$ with dim$_\R W=4$ is called \textit{planar} if all fibers of $f$ have genus zero, i.e., are planar surfaces.
\end{definition}

Note that the boundary restriction of a planar Lefschetz fibration induces a planar open book decomposition. Now we are ready to introduce a new class of Lefschetz fibrations for a Weinstein domain $W$ as follows:

\begin{definition} A Weinstein domain $(W^{2n}, \omega)$, $n\geq 2$, admits an \textit{iterated planar Lefschetz fibration} if 
\begin{enumerate}
\item there exists a sequence of symplectic Lefschetz fibrations $f_2, \dots, f_{n}$ where $f_{i}: W^{2i} \rightarrow \D$ for $i=2, \dots, {n}$.
\item Each regular fiber of $f_{i+1}$ is the total space of $f_{i}$, i.e., $W^{2i}$ is a regular fiber of $f_{i+1}$.
\item $f_{2}: W^4\rightarrow \D$ is a planar Lefschetz fibration.
\end{enumerate}
Here the superscript $2i$ indicates the dimension of $W^{2i}$. Notice that when $n=2$, $W^4$ admits a planar Lefschetz fibration.
\end{definition}

Let us give here two important examples of manifolds admitting iterated planar Lefschetz fibrations.

\begin{example}
For $n\geq 2$, the unit disk bundle $W^{2n}=\ct n$ admits an iterated planar Lefschetz fibration since each regular fiber of a Lefschetz fibration on $\ct n$ is $\ct {n-1}$ and the Lefschetz fibration on $\ct 2$ is planar with fibers $\ct 1=[0, 1] \times S^1$.
\end{example}

\begin{example} Consider $A_k$-singularity which is, by definition, symplectically identified with
$$\{(z_1, \dots, z_n) \mid z_1^2+\dots+z_{n-1}^2+z_{n}^{k+1}=1\} \subset (\C^n, \omega_{std})$$ for $n\geq 3$ and $k\geq2$. It is known that $A_k$-singularity can be expressed as a plumbing of $k$ copies of $\ct {n-1}$. Notice that each regular fiber of a Lefschetz fibration on $A_k$-singularity is $\ct {n-1}$. Hence, the discussion above implies also that $A_k$-singularity admits an iterated planar Lefschetz fibration.
\end{example}

If $f: W \rightarrow \mathbb{D}$ is an iterated planar Lefschetz fibration, then, after smoothing the corners, the boundary of $W$ inherits an open book decomposition whose pages are diffeomorphic to the regular fibers of $f$. The following definition is motivated by looking at the open book decomposition induced by the boundary restriction of an iterated planar Lefschetz fibration. 

\begin{definition}An open book decomposition of a contact manifold $(M^{2n+1}, \xi)$ whose pages admit iterated planar Lefschetz fibrations is called an \textit{iterated planar open book decomposition} and finally a contact manifold supporting such an open book decomposition is called an \textit{iterated planar contact manifold}.
\end{definition}

We shall now construct an almost complex structure $J$ on $\widehat{W}$ compatible with a suitable stable Hamiltonian structure on $\widehat{W}$ to be able to study the $J$-holomorphic curves on the pages of the open book decomposition of $\partial W$. Denote by $(W, \omega=d\lambda)$ a $2n$-dimensional Weinstein manifold with boundary $\partial W=M$. Suppose that $(M, \xi=\op{ker}\lambda)$ bounds an iterated planar Lefschetz fibration. Then the following lemma holds:
 
\begin{lemma} \label{lemma:wendl}
Let $(f: W \to \D, \omega=d\lambda)$ be an iterated planar Lefschetz fibration. Then there exists an almost complex structure $J \in \mathcal{J}^h(\widehat{W})$ compatible with a suitable stable Hamiltonian structure $\mathcal{H}$ on $[0, \infty) \times M$ such that the fibers of $\hat{f}: \widehat{W} \rightarrow \C$ are holomorphic. 
\end{lemma}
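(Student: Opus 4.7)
The plan is to build the desired pair $(J,\mathcal{H})$ in three steps: (i) construct a stable Hamiltonian structure $\mathcal{H}=(\Omega,\Lambda)$ on $M=\partial W$ whose Reeb flow preserves the pages of the open book induced by $f$, (ii) extend $\mathcal{H}$ by $\R$-invariance to the cylindrical end $[0,\infty)\times M$ of $\widehat{W}$, and (iii) choose $J$ on $\widehat{W}$ so that $J\in\mathcal{J}^h(\widehat{W})$ and on the end $J$ restricts to an $\Omega$-compatible almost complex structure in $\mathcal{J}(\mathcal{H})$. Condition (1) of Definition \ref{definition:symp_comp_acs}, $d\hat{f}\circ J = j\circ d\hat{f}$, makes $\ker d\hat{f}$ a $J$-invariant subbundle, so every fiber of $\hat{f}$ is automatically $J$-holomorphic once such a $J$ is produced.

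For (i), begin with a Giroux form $\lambda$ on $M$ adapted to the open book $(B,\pi)$, where $B=\partial_h W$. Condition (4) in the definition of a symplectic Lefschetz fibration furnishes a collar of $B$ on which $\lambda=\lambda|_{\partial F_{z_0}}+Kf^*\sigma$, providing normal disk coordinates $(r,\theta)$ to $B$. Following a Wendl-type SHS construction adapted to higher dimensions, deform $\lambda$ inside this collar to
\begin{equation*}
\Lambda = g_1(r)\,\lambda|_{\partial F_{z_0}}+g_2(r)\,d\theta,\qquad
\Omega = \omega_F + d\bigl(h(r)\,d\theta\bigr),
\end{equation*}
where $\omega_F$ is the page symplectic form and $g_1,g_2,h$ are profile functions chosen so that (a) $\Lambda\wedge\Omega^n>0$ and $\ker\Omega\subset\ker d\Lambda$, and (b) the Reeb vector field $R_{\mathcal{H}}$ is tangent to the pages everywhere and is a positive multiple of the binding Reeb vector field along $B$. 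Outside the collar, set $(\Omega,\Lambda)=(d\lambda,\lambda)$ and interpolate smoothly on an annular transition region.

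With $\mathcal{H}$ in hand, construct $J$ as follows. On the compact piece $W$, pick $J_W\in\mathcal{J}_f(W)$ using the nonemptiness and contractibility of that space. On $[0,\infty)\times M$, pick $J_\infty\in\mathcal{J}(\mathcal{H})$ subject to the additional condition $d\hat{f}\circ J_\infty = j\circ d\hat{f}$. This additional condition is consistent with $J_\infty(\partial_s)=R_\mathcal{H}$ because $R_\mathcal{H}\in\ker d\hat{f}$ by (b), and consistent with $J_\infty(\xi)=\xi$ because $\xi=\ker\Lambda$ splits into a page-tangent part (contained in $\ker d\hat{f}$) and a page-normal part, both of which may be made $J_\infty$-invariant. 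The space of such $J_\infty$ remains nonempty and contractible, so $J_W$ and $J_\infty$ glue over a collar of $\partial W$ via a partition-of-unity argument in this contractible parameter space, yielding the required $J\in\mathcal{J}^h(\widehat{W})$.

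The hard part will be step (i): arranging $R_{\mathcal{H}}\in\ker d\hat{f}$ along the binding. The Reeb vector field of a generic Giroux form is not tangent to the pages, so the SHS deformation near $B$ is unavoidable, and the profile functions $g_1,g_2,h$ must be chosen carefully to simultaneously preserve $\Lambda\wedge\Omega^n>0$, maintain nondegeneracy of $\Omega|_\xi$, and force the Reeb direction onto the pages. The iterated planar hypothesis is not needed for this lemma itself (it enters later, when controlling the moduli spaces of $J$-holomorphic curves); only the symplectic Lefschetz fibration structure and its product model near $\partial_h W$ are used to produce the requisite local model for $\mathcal{H}$.
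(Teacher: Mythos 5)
Your plan (build a Wendl-style stable Hamiltonian structure $\mathcal{H}$ on $M=\partial W$, extend $\R$-invariantly, and glue compatible almost complex structures) is the same basic approach as the paper, and your observation that condition (1) of Definition \ref{definition:symp_comp_acs} automatically makes the fibers of $\hat f$ $J$-complex is a clean way to say what the paper says more concretely (that $\xi$ is tangent to the pages and $J(\xi)=\xi$). The paper constructs $\mathcal{H}$ somewhat differently — from the corner coordinates of the Lefschetz fibration, smoothing $\partial W$ and producing a stabilizing vector field $Z$ interpolating between the corner Liouville field $Z_K=\partial_t+\partial_s$ and $\partial_t$, then setting $\Lambda=\iota_Z\omega_K|_M$, $\Omega=\omega_K|_M$ — rather than deforming a Giroux form near the binding via profile functions as you do, but both are variants of the construction in \cite{W2}.

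However, your step (i), condition (b), contains a genuine error that the rest of your argument leans on. You require $R_\mathcal{H}$ to be tangent to the pages everywhere. This is impossible for a stable Hamiltonian structure: away from the binding the pages are exactly the integral leaves of $\xi=\ker\Lambda$ (indeed your $J$ preserves $\ker d\hat f$ precisely because $\xi$ is the page tangent distribution), and $\Lambda(R_\mathcal{H})=1$ forces $R_\mathcal{H}$ to be transverse to $\xi$, hence transverse to the pages. In the paper's construction the Reeb vector field near the corner is $\partial_\phi$, the angular direction in the base $\D$, tangent to the binding but not to the interior of a page, and indeed $\hat f_*(R_\mathcal{H})\neq 0$. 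Your consistency check in step (iii), that $J_\infty(\partial_s)=R_\mathcal{H}$ and $d\hat f\circ J_\infty=j\circ d\hat f$ are compatible ``because $R_\mathcal{H}\in\ker d\hat f$,'' therefore fails as stated. The actual reason these two conditions are simultaneously achievable is that, near $\partial_v W$, $\hat f_*(\partial_s)$ is the radial Liouville direction in $\C$ while $\hat f_*(R_\mathcal{H})$ is the angular direction, and the base complex structure $j$ rotates the former into the latter; so $d\hat f(J(\partial_s))=j(d\hat f(\partial_s))$ holds with both sides nonzero. You should drop (b), keep the transversality of $R_\mathcal{H}$ to the pages, and rephrase the consistency argument in terms of $j$ intertwining the base radial and angular directions.
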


\begin{proof} Recall that the space of almost complex structures compatible with the completion of a symplectic manifold (c.f. Definition \ref {definition:symp_comp_acs}) is nonempty and contractible. Then, there exists at least one $J \in \mathcal{J}^h(\widehat{W})$ defined as in Definition \ref{definition:symp_comp_acs}. It is left to show that it is also compatible with some suitable stable Hamiltonian structure on the symplectization of $M$ such that the fibers of $\hat{f}$ are holomorphic.\sm 

We will start with the construction of a suitable stable Hamiltonian structure on $M$ and then examine the abstract open book decomposition of $M$. The advantage of working with stable Hamiltonian structures is that if we pick a suitable stable Hamiltonian structure as in \cite{W2}, then then it admits arbitrarily small perturbations that are contact structures supported by the planar open book decomposition defined on the boundary of a planar Lefschetz fibration. \sm

Let $\theta$ and $\phi$ be the coordinates on the boundary of fibers and on the boundary of $\D$, respectively. Denote by $s$ and $t$ the vertical and horizontal collar coordinates of $\partial_v W$ and $\partial_h W$, respectively. Let $F_z$ be the fiber over $z \in D$. Consider the fiberwise Liouville form $\lambda|_{\partial F_z}$ on each of the fibers such that 
\begin{enumerate}
\item $d\lambda|_{F_z}$ is a symplectic form and
\item $\lambda|_{\partial F_z}$ is a contact form.
\end{enumerate}

Note that the fiberwise Liouville form $\lambda|_{\partial F_z}$ is independent of the choice of $z \in \D$. Observe also that $\lambda|_{\partial F_z}=e^{s} d\theta$ near $\partial_h W$. Let $\sigma$ be the standard Liouville form on $\D$ such that $\sigma=e^t d\phi$ is a Liouville form near the boundary of $\D$. By using this information, one can construct a Liouville form near corners whose flow smooths the corners of $\partial W$.\sm

\begin{figure}[h]
\begin{overpic}[scale=1.2]{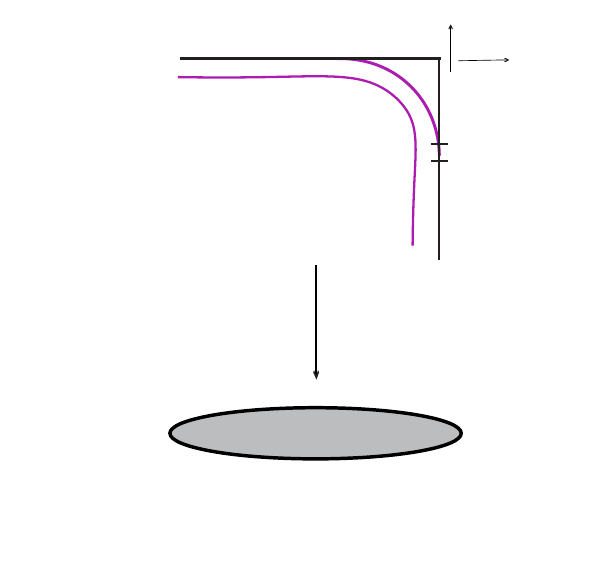}
\put(55,40){$f$}
\put(74,91){$s$}
\put(87,82){$t$}
\put(50,52){$W$}
\put(68,79.5){$M$}
\put(60,74){$M_{\epsilon}$}
\put(75,70){$a$}
\put(75,64){$b$}
\put(30,86){$\partial_h W$}
\put(75,51){$\partial_v W$}
\put(21,20){$\D$}
\end{overpic}
\vspace{-1cm}
\caption{The local picture representing the Lefschetz fibration $f$ after the smoothing the corner of the total space $W$. Here the hypersurface $M$ represents the smoothed boundary $\partial W$ and $M_{\epsilon}$ denotes another hypersurface below and sufficiently close to $M$.}
\label{smoothingcorner}
\end{figure}

We shall now construct a Liouville vector field near the corners by turning the fiberwise Liouville form into a Liouville form:
\begin{equation}
\lambda_K=\lambda|_{\partial F_z}+Kf^{*} \sigma,
\end{equation}
where $K$ is a sufficiently large positive constant and $\omega_K=d\lambda_K$ is symplectic (see Seidel \cite[Lemma 1.5]{PS}). Note that the Liouville form $\lambda_K$ induces a Liouville vector field $Z_K=(\partial_t+\partial_{s})$ near the corner and a Reeb vector field $\partial_{\phi}$.\sm

Now we want to define a new vector field $Z$ in some neighborhood of $M$ as follows:

 \[ Z :=\begin{cases} 
      Z_K & s \geq a \text{ and near }\partial_h W, \\
      \partial_t+\beta(s)\partial_{s} & b\leq s \leq a, \\
      \partial_t & \text{for } s \leq b, 
   \end{cases}
\] where $\beta$ is a suitably chosen cutoff function.\sm

In light of this vector field, one can construct a stable Hamiltonian structure $\mathcal{H}$ induced by $Z$ on $M$ as follows:
\begin{align*}
\Lambda:= & {\iota _z\omega_K}|_{M}, \\
\Omega:= & {\omega_K}|_{M}.
\end{align*}

Recall that the characteristic vector field is given by the $\phi$-direction. Notice that the flow of $Z$ doesn't change that line field since there is no dependence on the $s$-direction. Now observe that if we take the smooth hypersurface $M$ and flow it along $Z$, we obtain a 1-parameter family of hypersurfaces whose characteristic line fields are all identical. In other words, $Z$ is a stabilizing vector field. Hence, if we choose another hypersurface $M_\epsilon$ which is transverse to the Liouville vector field $Z_K$, lying below $M$ and also sufficiently close to it as in Figure \ref{smoothingcorner}, then the region in between is a symplectic cobordism. \sm

Observe also that on $\partial_h W$, $\omega|_{\partial_h W}=Ke^t dt\wedge d\phi+d\lambda|_{\partial_h W}=d\lambda|_{\partial_h W}$. Near the corner, $d\lambda|_{\partial F_z}=e^{s}d{s}\wedge d\theta$. Note that this construction is true for any $(2n+1)$-dimensional Weinstein fillable contact manifold $M$. \sm

The remainder of the proof closely follows the construction in \cite{W2}. Denote by $(F, \Phi)$ the abstract open book decomposition of $M$. Let $F_\Phi$ and $B$ the associated mapping torus and the binding, respectively. Here we are using the notation in Definition \ref{definition: AOBD}. By page, we mean the fibers of the mapping torus $F_{\Phi}$. The compatibility check of $J$ on $M$ will be done on $F_\Phi$ and $\D \times B$ separately. \sm

On $F_\Phi$, the co-oriented hyperplane distribution $\xi$ is tangent to the pages and is preserved under any $J$ we choose. Therefore, for any choice of $J$, the fibers of the mapping torus, i.e., the pages become holomorphic. That is to say, $\{s\} \times \{\theta\} \times F$ in $\R \times F_\Phi$ is a $J$-holomorphic curve for any choice of $J$. Now we need to check if that extends to a neighborhood of the binding. To do this, we need to find a $J$-holomorphic cylinder asymptotic to $\{r=0\}$ near the binding that fit smoothly with the fiber $\{\theta\} \times F$. To see this, we refer the reader to \cite{W2} for a detailed analysis near the binding.\sm

One can then attach the cylinder to the holomorphic fiber $\{s\} \times \{\theta\} \times F$ which extends to a holomorphic fiber in the symplectization of $F_{\Phi}$ to an embedded $J$-holomorphic curve in the symplectization of the Weinstein fillable contact manifold $M$. Therefore, $J\in \mathcal{J}^h(\widehat{W})$ is compatible with the symplectization of the stable Hamiltonian structure $\mathcal{H}$.
\end{proof}

\section{Proof of the Main Theorem}
The proof is based on an inductive argument which helps us carry several results and phenomena particular to dimension $4$, such as automatic transversality and positivity of intersection, to higher dimensions. 

\subsection{Setup} \label{section:setup}

Let $f_i: W^{2i} \to \D$ be an iterated planar Lefschetz fibration and let $M=M^{2n+1}$ and $W=W^{2i}$ with $\partial W^{2i}=M^{2i-1}$. Let $(\mathbb{R}\times M^{2n+1}, d(e^s\lambda))$ be the symplectization of a contact manifold $(M^{2n+1}, \xi=\op{ker} \lambda)$ supporting an open book whose pages are $2n$-dimensional Weinstein domains $W^{2n}$, where $s$ denotes the $\mathbb{R}$-coordinate. Assume also that $W^{2n}$ admits an iterated planar Lefschetz fibration.\sm

Denote by $\widehat{W}$ the completion of a Weinstein domain $W$. Let $J \in \mathcal{J}^h(\widehat{W})$. We will specify the Reeb orbits $\gamma^+$ and the relative homology class $A$ for the moduli space $\mathcal{M}_{\widehat{W}, m}(J, A, \gamma^+)$ of planar $J$-holomorphic curves in $\widehat{W}$ asymptotic to $\gamma^+$. Notice that these curves do not have any negative ends since $W$ is a Weinstein domain.\sm 

When $n=1$, we take $\gamma^+$ to be the binding of the open book supported by $M^3$. Similarly, when $n=2$, we take $\{0\} \times \gamma^+$ in the neighborhood $\D \times \partial W^4=\D \times M^3$ of the binding of the open book of $M^5$. \sm

More generally, when $n>1$, the associated Reeb orbits for $M^{2n+1}$ are $$\{0\} \times \{0\} \times \dots \times \{0\} \times \gamma^+ \subset \D \times \D \times \dots \times \D \times M^3 \subset \D \times M^{2n-1}$$ which we will simply call $\gamma$ throughout the paper. \sm

We set $A=[\{0\} \times \{0\} \times \dots \times \{0\} \times W^2] \in H_2(W^{2n}, \gamma)$ where $$\{0\} \times \{0\} \times \dots \times \{0\} \times W^2 \subset \D \times W^{2n-2}.$$

\begin{notation} In the rest of this paper, we write $\mathcal{M}_{\widehat{W}}(J)$ instead of $\mathcal{M}_{\widehat{W}, m}(J, A, \gamma)$. We will assume $m=1$ unless stated otherwise.
\end{notation}

\begin{prop} \label{main_prop} 

Suppose $W^{2n}$ admits an iterated planar Lefschetz fibration. Then there exists an almost complex structure $J \in  \mathcal{J}^h(\widehat{W}^{2n})$ compatible with a suitable stable Hamiltonian structure $\mathcal{H}$ such that $\mathcal{M}_{\widehat{W}^{2n}}(J)$ is regular and $\widehat{W}^{2n}$ is ``filled" by planar $J$-holomorphic curves, i.e. the degree of the evaluation map ev: $\mathcal{M}_{\widehat{W}^{2n}}(J) \to \widehat{W}^{2n}$ is 1 mod 2.

\end{prop}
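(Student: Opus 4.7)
The plan is to induct on $n$, with base case $n=2$ and inductive step reducing the moduli space on $\widehat{W}^{2n}$ to a $\C$-parameterized family of moduli spaces on the fiber $\widehat{W}^{2n-2}$. The engine throughout is Lemma \ref{lemma:wendl}, which supplies $J \in \mathcal{J}^h(\widehat{W}^{2n})$ making the Lefschetz fibers $J$-holomorphic and simultaneously compatible with a suitable stable Hamiltonian structure on the cylindrical end.

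For the base case $n=2$, I would first choose $J$ so that the fibers of $\hat{f}_2 : \widehat{W}^4 \to \C$ are $J$-holomorphic. Each fiber is an embedded planar surface in the class $A$ asymptotic to the binding orbits $\gamma$, and letting $z$ range over $\C$ yields a $\C$-family of curves in $\mathcal{M}_{\widehat{W}^4}(J)$. Regularity follows from the automatic transversality theorem of Wendl for embedded punctured curves in dimension four. Since every point $p \in \widehat{W}^4$ lies on exactly one fiber, namely $\hat{f}_2^{-1}(\hat{f}_2(p))$, the evaluation map has degree $1$ mod $2$.

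For the inductive step, let $f_n : W^{2n} \to \D$ have regular fiber $W^{2n-2}$, and apply Lemma \ref{lemma:wendl} to obtain $J \in \mathcal{J}^h(\widehat{W}^{2n})$ whose restriction to a reference fiber coincides with the inductively chosen $J_{n-1}$ on $\widehat{W}^{2n-2}$. The key structural observation is that every $u \in \mathcal{M}_{\widehat{W}^{2n}}(J)$ must lie inside a single fiber of $\hat{f}_n$: since $\hat{f}_n$ is $(J,j)$-holomorphic, the composition $\hat{f}_n \circ u$ is a $j$-holomorphic map $\dot{F} \to \C$; the asymptotic Reeb orbits $\gamma$ lie at $\{0\}^{n-1}$ in the iterated $\D$-neighborhoods of the binding and therefore project under $\hat{f}_n$ to a single point, while $A$ projects to the trivial class. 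A removal-of-singularities and open-mapping argument then forces $\hat{f}_n \circ u$ to be a constant $z_u \in \C$, so $u$ lies in the fiber $F_{z_u} \cong \widehat{W}^{2n-2}$ and represents the class $A_{n-1}$ dictated by the inductive hypothesis.

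Consequently $\mathcal{M}_{\widehat{W}^{2n}}(J)$ is identified with a $\C$-family of copies of $\mathcal{M}_{\widehat{W}^{2n-2}}(J_{n-1})$. Regularity follows by splitting the linearized Cauchy--Riemann operator along the fibration into a fiber component (surjective by induction) and a trivial base direction parameterized by $z$. For a generic $p \in \widehat{W}^{2n}$ all curves through $p$ lie in the fiber $F_{\hat{f}_n(p)}$, and by induction the number of such curves carrying the marked point to $p$ is $1$ mod $2$, so the overall evaluation map has degree $1$ mod $2$. The main obstacle I anticipate is rigorously establishing that $\hat{f}_n \circ u$ is constant: this demands careful asymptotic analysis at the punctures, ensuring that all Reeb orbits in $\gamma$ really do sit in a single fiber of $\hat{f}_n$ under the SHS constructed in Lemma \ref{lemma:wendl}, together with a boundedness estimate for $\hat{f}_n \circ u$ near infinity in $\dot{F}$; a secondary subtlety is confirming that Wendl's automatic transversality genuinely applies to the specific class $A$ and SHS used in the base case.
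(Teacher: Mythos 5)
Your overall strategy matches the paper's: induct on $n$, show every $u\in\mathcal M_{\widehat{W}^{2n}}(J)$ is confined to a fiber of $\hat f_n$ via the $(J,j)$-holomorphicity of $\hat f_n$ and the open mapping theorem (this specific argument does appear in the paper, in Section~\ref{pfofregularity} and again in Lemma~\ref{lemma}), and conclude degree~$1$ of the evaluation map by comparing with the inductively filled fiber. However, there is a genuine gap: your reduction ``$\mathcal M_{\widehat{W}^{2n}}(J)$ is identified with a $\C$-family of copies of $\mathcal M_{\widehat{W}^{2n-2}}(J_{n-1})$'' is false over the critical values of $f_n$, where the fiber is singular rather than a copy of $\widehat{W}^{2n-2}$. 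Precisely there, the product almost complex structure and the split of the linearized Cauchy--Riemann operator that you invoke for regularity break down; the fiber has a nodal point, the curves in it can degenerate, and regularity and compactness cease to be automatic. The paper devotes most of its technical effort to this locus and your proposal does not engage with it.

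Concretely, the paper handles the singular fibers by: (i) identifying a neighborhood $E\cong T^*S^{n-1}$ of each critical point, computing the induced complex structure from $\C^n$ and proving it is asymptotically cylindrical (Lemma~\ref{lemma:inducedcpx}); (ii) applying $(M,\lambda)$-neck-stretching on $\partial E\cong \R P^{2n-3}$ to normalize the critical neighborhood (Section~\ref{ns}), together with a perturbation of the contact form on $ST^*S^{n-1}$ turning the Morse--Bott geodesics into nondegenerate Reeb orbits of Conley--Zehnder index~1; (iii) invoking a gluing theorem (Theorem~\ref{thm:gluing}) to reassemble the moduli space from the moduli spaces over $E$ and over its complement; and (iv) the biholomorphism $\Theta$ straightening the singular locus $\{w=z_n^2\}$, which makes the product structure of $J$ near a critical point literal, so regularity can be read off. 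None of these ingredients is present in your outline, so while your observation that curves live in fibers is correct and important, the heart of the proposition --- that regularity, compactness, and the degree count persist across singular fibers --- is not addressed. Finally, a smaller point: your base case argues directly that fibers of $\hat f_2$ foliate $\widehat{W}^4$, but the singular fibers of $f_2$ are not elements of $\mathcal M_{\widehat{W}^4}(J)$, so the identification of each point with a unique moduli space element is not immediate; the paper sidesteps this by citing Wendl's foliation theorem \cite{W2}, whose curves are built from the open book on $\partial W^4$ rather than literally from fibers of $\hat f_2$, and by relying on automatic transversality and positivity of intersections in dimension four.
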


The proof can be summarized as follows: Denote by $f_n: W^{2n} \to \D$ the associated iterated planar Lefschetz fibration. We will study the foliation of $\widehat{W}^{2n}$ by planar $J$-holomorphic curves by using $f_n$. If $f_n$ has no singular points, then we can use the facts that there exists an almost complex structure $J$ compatible with a suitable stable Hamiltonian structure such that each regular fiber $W^4$ of $f_3$ is foliated by planar $J$-holomorphic curves \cite[Main Theorem]{W2} and each regular fiber of $f_n$ is $W^{2n-2}$. Hence, the filling follows.\sm

If $f_n$ has singular points, we need to make sure that the holomorphic curves pass through the singular points safely without any obstruction. Away from singular points, regular fibers are foliated by planar $J$-holomorphic curves by using the inductive argument provided by the iterated planar Lefschetz fibration structure. However, around each singular point, it is not clear if we can extend the family of $J$-holomorphic curves in a regular fiber across the singular fibers safely. The singular point may block the holomorphic curves from passing through the singular point. \sm

A priori, we have no control over curves when they get pinched. We want to extend the family of curves in the regular fiber across singular fibers. In order to extend across the singular fibers, we will apply neck stretching to a neighborhood of a critical point since each curve in that neighborhood will then look like limiting Reeb orbits. By neck stretching, we are able to normalize the neighborhood of the critical point. Hence, the picture becomes clearer. This local analysis is to make sure that there exists a planar $J$-holomorphic curve through each point in $\widehat{W}^{2n}$. \sm

We will prove the Proposition \ref{main_prop} via an inductive construction examining the following arguments inductively:
\begin{enumerate}
\item All $J$-holomorphic curves in the associated moduli space are transversely cut out, i.e., the moduli space is regular.
\item The compactification of the associated moduli space is nice. This will be elaborated in Section \ref{subsec:compactify}.
\item The associated moduli space has the expected dimension.
\end{enumerate}

Once we achieve these for all $n\geq2$, we can use the evaluation map to show that there exists a unique holomorphic curve, up to algebraic count, passing through each generic point in $\widehat{W}^{2n}$ since the evaluation map is a degree $1$ map. The rest of the proof will then be reduced to applying neck-stretching to a neighborhood of a contact hypersurface in $\R \times M^{2n+1}$ which will help us observe that all $J$-holomorphic curves in the moduli space $\mathcal{M}_{\R \times M, 1}(J)$ are asymptotic to closed Reeb orbits. 

\subsection{Index calculation} \label{indexformula}
Let $u \in \mathcal{M}_{\widehat{W}^{2n}}(J)$ be a planar $J$-holomorphic curve in $\widehat{W}^{2n}$.
\begin{lemma}
The index of u (the Fredholm index of the linearized Cauchy-Riemann operator) in $\mathcal{M}_{\widehat{W}^{2n}}(J)$ is $2n-2$.
\end{lemma}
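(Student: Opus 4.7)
I would prove $\operatorname{ind}(u) = 2n-2$ by induction on $n$, based on the splitting of the linearized Cauchy--Riemann operator along the image of $u$ induced by the iterated planar Lefschetz fibration $f_n : W^{2n} \to \D$. The key observation is that since $A = [\{0\}\times\cdots\times\{0\}\times W^2]$, the image of $u$ is contained in the smooth fiber $\widehat{W}^{2n-2} = \hat{f}_n^{-1}(0)$; combined with the $(J,j)$-holomorphicity of $\hat{f}_n$ for $J \in \mathcal{J}^h(\widehat{W}^{2n})$, this yields a $J$-invariant splitting
\[
u^{*}T\widehat{W}^{2n} \;=\; u^{*}T^{\mathrm{vert}} \oplus u^{*}\mathcal{N},
\]
where $T^{\mathrm{vert}} = \ker d\hat{f}_n$ is tangent to the fiber and $\mathcal{N}$ is a complex line subbundle pulled back from the tangent bundle of the base $\C$, hence trivial.

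\textbf{Inductive step.} With respect to this splitting, $D_u$ (the linearization of $\overline{\partial}_J$ at $u$) is block upper-triangular, so its Fredholm index is the sum of the indices of the diagonal blocks:
\[
\operatorname{ind}(u,\widehat{W}^{2n}) \;=\; \operatorname{ind}(u,\widehat{W}^{2n-2}) + \operatorname{ind}(D_u^{\mathrm{norm}}),
\]
where $D_u^{\mathrm{norm}}$ is the normal CR operator on $u^{*}\mathcal{N}$. By the inductive hypothesis the first term equals $2(n-1)-2 = 2n-4$. For the second, the Riemann--Roch-type formula for a CR operator on a complex line bundle over $\dot F$ gives
\[
\operatorname{ind}(D_u^{\mathrm{norm}}) \;=\; \chi(\dot F) + 2\,c_1(u^{*}\mathcal{N}) + \sum_{i=1}^{k}\mu_{CZ}^{\mathrm{norm}}(\gamma_i) \;=\; (2-k) + 0 + \sum_{i=1}^{k}\mu_{CZ}^{\mathrm{norm}}(\gamma_i),
\]
where $k$ is the number of binding components of the innermost open book on $M^3$ (equivalently the number of positive punctures of $u$). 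Each $\gamma_i$ sits at the origin of the normal $\D$-factor, where the linearized Reeb flow is degenerate; under the Morse--Bott perturbation of $\SH$ furnished by Lemma \ref{lemma:wendl}, with $0\in\D$ chosen as the minimum of a Morse function on the transverse slice, the standard Morse--Bott CZ formula yields $\mu_{CZ}^{\mathrm{norm}}(\gamma_i) = 1$ for each $i$. Therefore $\operatorname{ind}(D_u^{\mathrm{norm}}) = (2-k) + k = 2$, and $\operatorname{ind}(u,\widehat{W}^{2n}) = 2n-2$.

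\textbf{Base case $n=2$.} Here $u$ is exactly the type of planar $J$-holomorphic curve studied by Wendl \cite{W2}: a curve in $\widehat{W}^4$ representing the fiber class of the planar Lefschetz fibration $f_2$, with $k$ positive punctures asymptotic to the binding of a planar open book on $M^3$. His analysis gives Fredholm index $2 = 2n-2$, which can also be checked directly from the dimension-$4$ formula
\[
\operatorname{ind}(u) \;=\; (-1)(2-k) + 2(2-k) + k\cdot 1 \;=\; 2,
\]
using $\chi(\dot F) = 2-k$, $c_1(u^{*}T\widehat{W}^4) = \chi(W^2) = 2-k$ (relative to the page trivialization at the binding), and $\mu_{CZ}(\gamma_i) = 1$ for a standard non-degenerate perturbation of a planar open book binding orbit in dimension $3$.

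\textbf{Main obstacle.} The delicate ingredient is the normal Conley--Zehnder computation in the Morse--Bott setting: one must verify that the perturbation implicit in Lemma \ref{lemma:wendl} actually places each $\gamma_i$ at the minimum of a compatible Morse function on the transverse $\D$-slice and that this indeed yields $\mu_{CZ}^{\mathrm{norm}}(\gamma_i) = 1$ under the sign conventions of \cite{BEHWZ}. A secondary check is that the off-diagonal block in the decomposition of $D_u$ (essentially the second fundamental form of the fiber in $\widehat{W}^{2n}$) is a compact operator and so does not contribute to the index; this is automatic from the holomorphicity of $\hat{f}_n$ along the image of $u$ and the product structure near the binding, but the bookkeeping should be done carefully.
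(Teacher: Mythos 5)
Your proposal is correct and arrives at the same answer, but it takes a genuinely different route from the paper. The paper applies the single index formula
\[
\operatorname{ind}(u) = (n-3)\chi(\dot F) + \sum_{\gamma^+}\mu_{\mathrm{CZ}}(\gamma^+,\tau) + 2c_1\bigl(u^{*}T\widehat{W}^{2n}\bigr),
\]
once, with $\mu_{\mathrm{CZ}}(\gamma^+,\tau) = n-1$ computed all at once from the product trivialization near $\mathcal{N}(\partial W^{2n-2}) \cong \partial W^{2n-2}\times\D$ and $c_1 = \chi(\dot F) = 2-k$, giving $(n-3)(2-k) + k(n-1) + 2(2-k) = 2n-2$. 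You instead induct on $n$, splitting $u^{*}T\widehat{W}^{2n}$ into a vertical piece (inductive hypothesis contributes $2n-4$) and a trivial normal line bundle (normal CR index $2$). These are two bookkeepings of the same computation: your normal CZ contribution of $1$ per puncture plus the fiber CZ contribution of $n-2$ reconstructs the paper's total $\mu_{\mathrm{CZ}} = n-1$, and your $\chi(\dot F)$ term in the normal operator together with the vertical one assembles the paper's $(n-3)\chi + 2c_1$. What your approach buys is that the index formula never has to be invoked in dimension greater than $4$ (only the rank-one Riemann--Roch is needed at each step), and the structural decomposition makes explicit exactly where the $(J,j)$-holomorphicity of $\hat{f}_n$ and the triviality of the normal bundle enter; the trade-off is that you must justify the off-diagonal block of $D_u$ not contributing to the index and the normal CZ computation in the perturbed Morse--Bott setting, both of which the direct formula sidesteps. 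You correctly flag these as the delicate steps. Both proofs are sound.
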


\begin{proof}
Recall that the expected dimension of the moduli space of $J$-holomorphic curves in $\widehat{W}^{2n}$ is the Fredholm index of $\bar{\partial}_J u$. This index is
\begin{equation}\label{indformula2}
\mbox{ind}(u)=((n-3)\chi (\Sigma)+\sum \limits_{\gamma^+} \mu_{\text{CZ}}(\gamma ^+, \tau)-\sum \limits_{\gamma^-}\mu_{\text{CZ}}(\gamma^-, \tau)+2c_1(u^{*}T(\widehat{W}^{2n})),
\end{equation} 
where $\gamma^+$ and $\gamma^-$ are positive and negative ends of $u({\dot{F}})$, respectively and $\tau$ is the trivialization of $\xi$ along $\gamma^{\pm}$. Note that the Conley-Zehnder index is independent of the choice of $\tau$. The dimension of the ambient manifold is $2n$, which explains the term $(n-3)$.\sm

Let $k$ be the number of positive ends of $u(\dot{F})$. Recall that we do not have any negative ends. Therefore, $\mu_{\text{CZ}}(\gamma^-, \tau)=0$. Let us begin with the case $n=2$. With respect to the trivialization $\tau$ for which boundary of the pages are longitudinal, the Conley-Zehnder index of each of the Reeb orbits is $\mu_{\text{CZ}}(\gamma^+, \tau)=1$ for the elliptic case. \sm

The relative first Chern number of the bundle $u^{*}T(\widehat{W}^{2n}) \to \dot{F}$ with respect to the trivialization $\tau$ is
$$
c_1(u^{*}T(\widehat{W}^{2n}))=c_1(T\dot{F} \oplus N_u)=c_1(T\dot{F})+c_1(N_u)
$$
where $N_u$ denotes the normal bundle to $u$. Observe also that transverse direction is trivial. Thus, we have $$c_1(u^{*}T(\widehat{W}^{2n}))=c_1(T\dot{F}).$$

It remains to analyze the tangential component of $c_1$. Note that the relative Chern class of a planar surface $\dot{F}$ with $k$ positive ends is given by
\begin{equation*}
<c_1(T\dot{F}, \tau), \dot{F}>=<e(\dot{F}), \dot{F} >=\chi(\dot{F})=2-k.
\end{equation*}

Having determined the relative first Chern number, we shall compute the index for $n=2$: 
\begin{align*}
\text{ind}(u)&=(2-3)(2-0-k)+k-0+2(c_1(T\dot{F}))\\
&=-1(2-k)+k+2(2-k)=2.
\end{align*}

Now consider the case when $u: (\dot{F}, j) \longrightarrow (\widehat{W}^{2n}, J)$ and dim$_{\R}(\widehat{W}^{2n})=2n$. Consider also a projection map from a neighborhood $\mathcal{N}(\partial (W^{2n-2}))$ of $\partial (W^{2n-2})$ to $\D$. This map gives a trivialization $\tau$ of $\mathcal{N}(\partial (W^{2n-2})) \cong \partial (W^{2n-2}) \times \D$. We then obtain $$\mu_{\operatorname{CZ}}(\gamma^+, \tau)=\dfrac{2n}{2}-1=n-1.$$ Thus, Equation \ref{indformula2} implies that  
\begin{align*}
\text{ind}(u)&=(n-3)\chi(\dot{F})+k\cdot (n-1)-0+2\chi(\dot{F})\\
&=(n-3)(2-k)+k\cdot (n-1)+2(2-k)\\
&=(n-1)(2-k)+k\cdot (n-1)\\
&=2n-2.
\end{align*}
\end{proof}


\subsection{Induced complex structure on $T^*S^{n-1}$} \label{inducedcpx}  
Consider the iterated planar Lefschetz fibration $$f_n: W^{2n} \to \D.$$ 

One can then realize the cotangent bundle $\ct {n-1}$ as a subset of a regular fiber near a singular fiber of $f_n$. Equivalently, $\ct {n-1}$ can be thought of as the standard neighborhood of the vanishing cycle $S^{n-1}$ of a fiber of $f_n$.\sm

In this subsection, we consider $$T^*S^{n-1} =\{(u,v)\in \mathbb{R}^{n} \times \mathbb{R}^{n} \mid |u|=1 \ \mbox{and} \ u \cdot v=0\} \subset \R^{2n}$$ and identify it with a fiber of the map
\begin{equation} \label{eqn1}
\begin{split}
\varphi _n: \mathbb{C}^n &\rightarrow \mathbb{C}, \\
(z_1, \dots, z_{n}) &\mapsto z_1^2+ \dots +z_{n}^2.
\end{split}
\end{equation}

This identification gives a complex structure on $\ct {n-1}$ which is induced from $\C^n$ with the standard complex structure. We then prove:

\begin{lemma}\label{lemma:inducedcpx}
The standard complex structure on $\C^n$ induces a complex structure on $\ct {n-1}$ which is asymptotically cylindrical at the ends of $\ct {n-1}$.
\end{lemma}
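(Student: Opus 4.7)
The plan is to construct an explicit $O(n)$-equivariant diffeomorphism $F\colon \ct{n-1} \to \vfi_n^{-1}(1)$, pull back the standard complex structure $J_{\op{std}}$ from $\C^n$, and analyze its asymptotic behavior in cylindrical coordinates on the end of $\ct{n-1}$. Concretely, I would define
\begin{equation*}
F(x,p) \;=\; x\sqrt{1+|p|^2} \;+\; i\,p,
\end{equation*}
and check that $F$ is a diffeomorphism onto the fiber by writing $z=u+iv$, noting that $\vfi_n^{-1}(1) = \{|u|^2-|v|^2 = 1,\ u\cdot v = 0\}$, and observing that $(x,p) := (u/\sqrt{1+|v|^2},\,v)$ inverts $F$. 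The induced complex structure is $J := F^*J_{\op{std}}$.

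Since $\vfi_n$ is invariant under the diagonal $O(n)$-action on $\C^n = \R^n \oplus i\R^n$ and $F$ is $O(n)$-equivariant, $J$ is $O(n)$-invariant on $\ct{n-1}$. This reduces all explicit computations to a single transverse slice, say the representative point $(x,p) = (e_1,\, t\, e_2)$ as $t\geq 0$ varies. On the end of $\ct{n-1}$ I would introduce cylindrical coordinates by writing $p = e^s \hat p$ with $\hat p \in S^{n-1}$ and $x \cdot \hat p = 0$, identifying the complement of the zero section with $\R \times S^*S^{n-1}$. Under $F$,
\begin{equation*}
F(x,\,e^s \hat p) \;=\; e^s\bigl(x\sqrt{e^{-2s}+1} + i\hat p\bigr) \;=\; e^s(x + i\hat p) + O(e^{-s}),
\end{equation*}
so the image of each slice is an exponentially small perturbation of the $\R_{>0}$-scaling orbit through $(x,\hat p)$ lying in the nilpotent cone $\{z_1^2 + \dots + z_n^2 = 0\}$.

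The limiting complex structure $J_{\infty}$ is then the pullback of $J_{\op{std}}$ along this cone, scaled by $e^s$. It is manifestly $\R$-invariant and, by a direct computation on the $O(n)$-representative slice, agrees on each level $\{s\} \times S^*S^{n-1}$ with the standard cylindrical almost complex structure on the symplectization of $S^*S^{n-1}$ associated to the canonical contact form $\lambda_{\op{std}}$ (the restriction of the tautological one-form to the unit cosphere bundle). In particular one verifies $J_{\infty}(\partial_s) = R_{\lambda_{\op{std}}}$ (the geodesic Reeb vector field) and $J_{\infty}(\xi) = \xi$ for $\xi = \ker \lambda_{\op{std}}$.

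The main obstacle is controlling the decay of $J - J_{\infty}$ as $s \to \infty$ in a form sufficient to establish asymptotic cylindricity as used in the remainder of the paper. However, the correction factor $\sqrt{e^{-2s}+1} - 1$ and all of its derivatives in the tangential directions along $S^*S^{n-1}$ are $O(e^{-2s})$, so after pulling back to tangent vectors the resulting perturbation of $J$ decays at exponential rate. Combined with the $O(n)$-equivariance, which reduces the verification to a single orbit representative, this yields the asymptotically cylindrical behavior asserted in the lemma.
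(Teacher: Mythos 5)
Your proposal is correct and reaches the same conclusion, but by a genuinely different route from the paper. The paper identifies $\varphi_n^{-1}(1)$ with $T^*S^{n-1}$ via the map $\Phi(x+iy)=(-x/|x|,\,y|x|)$, which it verifies is an \emph{exact symplectomorphism} (so that $\Phi^*(v\,du)$ agrees with the Liouville form on the fiber); it then proves the limiting complex structure is cylindrical by working directly on the singular fiber $Q_s=\varphi_n^{-1}(0)$, intersecting with a sphere $H_s$ and verifying explicitly that $jX$ is the Reeb vector field and $j(\xi)=\xi$; and finally it delegates the passage from the singular to the regular fiber (the actual asymptotic decay) to a citation of Bao's Example 2. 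Your argument instead uses the non-symplectic parametrization $F(x,p)=x\sqrt{1+|p|^2}+ip$ (for which $F^*\bigl(\tfrac12(u\,dv-v\,du)\bigr)=-\sqrt{1+|p|^2}\,p\cdot dx$ is not the canonical Liouville form, though this is harmless for the lemma as stated), identifies the limit $J_\infty$ with the scaling structure on the nilpotent cone, and--crucially--supplies the exponential decay estimate $\sqrt{e^{-2s}+1}-1=O(e^{-2s})$ together with its derivatives, which is exactly the content the paper outsources to \cite{B}. Your $O(n)$-equivariance reduction is a nice simplification absent from the paper. What the paper's route buys is the exact symplectomorphism property, which is convenient for the symplectic compatibility discussions elsewhere; what yours buys is a self-contained, explicit proof of the asymptotic decay. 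The two places where you wave hands--the ``direct computation on the $O(n)$-representative slice'' verifying $J_\infty(\partial_s)=R_{\lambda_{\mathrm{std}}}$ and $J_\infty(\xi)=\xi$, and the precise passage from pointwise decay of the correction factor to $C^k$ decay of $J-J_\infty$ on tangent vectors--are exactly the verifications carried out in the paper on $H_s$ and in the cited reference, respectively, so nothing essential is missing, though these steps would need to be written out to make the proof fully rigorous.
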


\begin{proof}
Consider the map $\varphi _n$ given by Equation \ref{eqn1}. We want to identify $\varphi^{-1}_n (1)$ with $\ct {n-1}$. Let $w$ be a positive real number and $x=(x_1, \dots, x_n)$ and $y=(y_1, \dots, y_n)\in \R^n$. Then consider $$ \varphi^{-1}_n (w) =\{x+iy  \in \mathbb{C}^{n} \mid |x|^2 - |y|^2 =w \ \mbox{and} \  x\cdot y=0 \}. $$ 

Note that $\varphi^{-1}_n (w)$ is a symplectic submanifold of $(\C^n, \omega)$ with the standard symplectic form $\omega=\sum \limits_{j=1}^{n} dx_j\wedge dy_j$.

Consider $T^*S^{n-1} =\{(u,v)\in \mathbb{R}^{n} \times \mathbb{R}^{n} \mid |u|=1 \ \mbox{and} \ u \cdot v=0\} \subset \R^{2n}.$
Note that $\ct {n-1}$ is symplectic submanifold of $\R^{2n}$ with the restriction of the standard symplectic form $dv\wedge du$ on $\R^{2n}$. 

Consider the map
\begin{align*}
\phi : \C^n & \rightarrow \R^{2n},\\
x+iy &\mapsto \left(-\dfrac{x}{|x|}, y|x|\right).
\end{align*}

Denote by $\Phi$ the restriction map $\phi|_{\varphi^{-1}_n (1)}$. Then the fiber $\varphi^{-1}_n (1)$ can be identified with $\ct {n-1}$ via the following diffeomorphism:
\begin{align*}
\Phi : \varphi^{-1}_n (1) & \rightarrow T^*S^{n-1},\\
x+iy &\mapsto \left(-\dfrac{x}{|x|}, y|x|\right).
\end{align*}

Denote by $\lambda_1=\frac{1}{2}(xdy-ydx)|_{\varphi^{-1}_n (1)}$ and $\lambda_2=vdu|_{\ct {n-1}}$ the Liouville forms of $\varphi^{-1}_n (1)$ and $T^*S^{n-1}$, respectively. Note that $\lambda_1$ and $\lambda_2$ are pullbacks of the standard primitives of the symplectic forms on $\C ^n$ and $\R ^{2n}$ to $\varphi^{-1}_n (1)$  and $\ct {n-1}$, respectively. 

\begin{claim} The map $\Phi: (\varphi^{-1}_n (1), \lambda_1) \to (T^*S^{n-1}, \lambda_2)$ is an exact symplectomorphism.
\end{claim} 

\begin{proof} In light of the construction above, it is sufficient to show that $\Phi^{*}(\lambda_2)-\lambda_1$ is exact.
\begin{align*}
\Phi^{*}(vdu) &= \Phi^{*}(\sum_{j=1}^n v_jdu_j),\\
&=\sum_{j=1}^n y_j |x|\left(-\frac{1}{|x|}dx_j+\sum_{l=1}^n -\frac{x_l x_j}{|x|^2}dx_j \right),\\
&=\sum_{j=1}^n (-y_j dx_j+\sum_{l=1}^n -\frac{x_l x_j y_j}{|x|^2}dx_j ),\\
&=\sum_{j=1}^n -y_j dx_j=-ydx.\\
\Phi^{*}(vdu)-\frac{1}{2}(xdy-ydx) &= -ydx-\frac{1}{2}(xdy-ydx)=-\frac{1}{2}\left(xdy+ydx\right)=-\frac{1}{2}d(x\cdot y)=0.
\end{align*}

Hence, $\Phi: \varphi^{-1}_n (1) \to T^*S^{n-1}$ is an exact symplectomorphism. 
\end{proof}

Via the diffeomorphism $\Phi: \varphi^{-1}_n (1) \to T^*S^{n-1}$, the complex structure on $\C^n$ then induces a complex structure on $\ct {n-1}$. Let $j$ be a complex structure on $T^*S^{n-1}$ induced from the standard complex structure on $\C^n$ via the diffeomorphism $\Phi: \varphi^{-1}_n (1) \to T^*S^{n-1}$.\\

\noindent\textit{End behavior of $j$.} In this subsection, we consider the singular fiber $Q_s$ given by 
\begin{equation*}
Q_s=\left\{\begin{array}{ll}
  |x|^2-|y|^2=0, \\
  x \cdot y=0
\end{array}
\right\}
\end{equation*} 
and identify it with $Q_s\cong\varphi^{-1}_n (0)-\{0\} \to T^*S^{n-1}-\{\text{zero section}\}$. \sm

 Next we will analyze the behavior of the induced complex structure $j$ when it goes sufficiently far out in the fiber to see if it is cylindrical at the ends. To see this, we will examine the induced complex structure $j$ on the singular fiber and regular fiber, separately.\sm

\begin{figure}[h]
\begin{overpic}[ scale=.5]{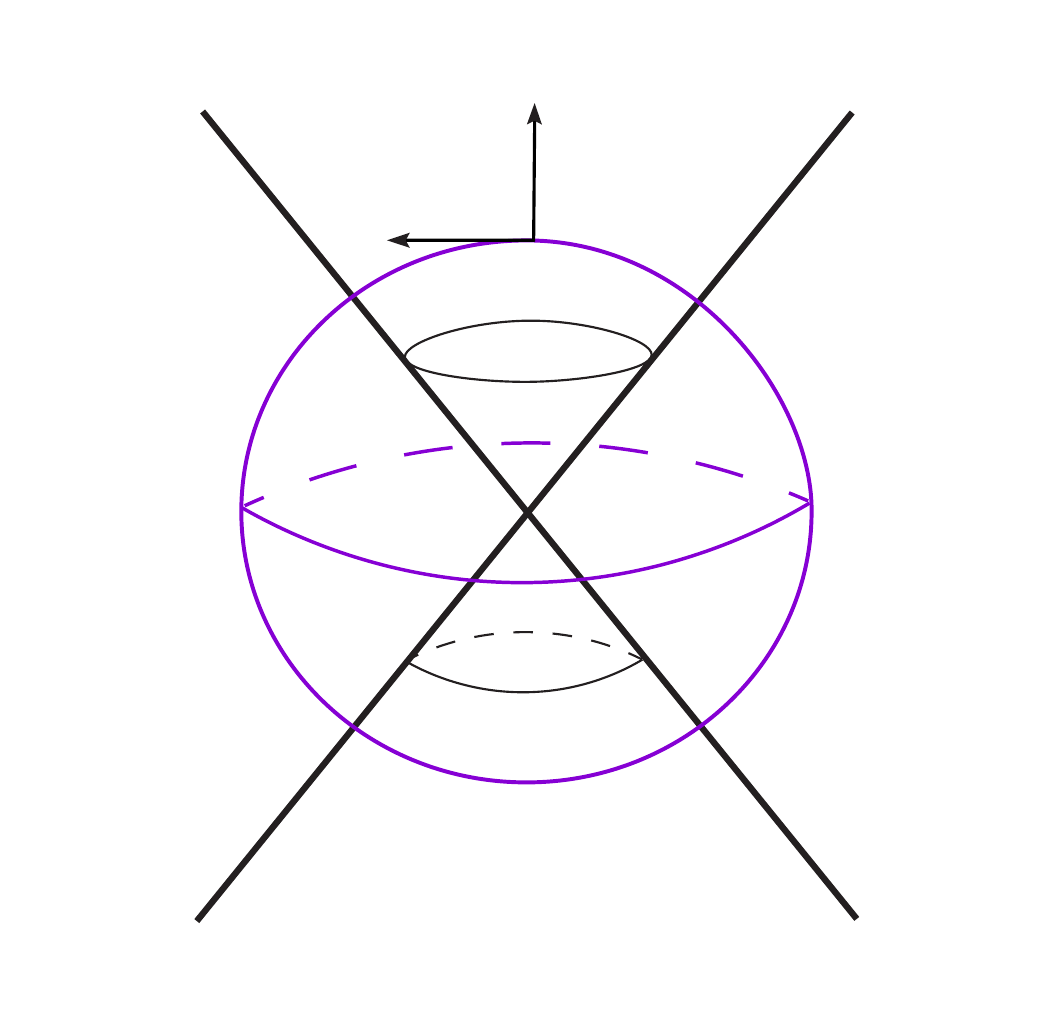}
\put(48,88){$X$}
\put(61,72){$H_s$}
\put(33,75){$JX$}
\put(80,47){$S$}
\put(84,12){$Q_s$}
\end{overpic}
\caption{Local picture representing the intersection of the singular fiber $Q_s$ with a sphere $S$ above a critical point. Here $H_s$ represents the hypersurface obtained by the intersection of $Q_s$ with the sphere $S$ given by the equation $x^2+y^2=2c^2$ for $c\in \R$.}
\label{singendbehavior}
\end{figure}

\begin{claim}
 The induced complex structure $j$ is cylindrical on $Q_s$.
\end{claim}
\begin{proof}
The singular fiber $Q_s$ is equipped with the standard symplectic form $\omega=d\lambda$ where $\lambda=\frac{1}{2}(xdy-ydx)$ and the induced complex structure $j$. Denote by $X$ the Liouville vector field given by $X=\frac{1}{2}\left(x\frac{\partial}{\partial x}+y\frac{\partial}{\partial y}\right)$. To observe the end behavior of the induced complex structure, we will intersect $Q_s$ with some sphere depicted as in Figure \ref{singendbehavior} and define the hypersurface, which we will call $H_s$, as follows:
\begin{equation*}
H_s=\left\{\begin{array}{lll}
  |x|^2-|y|^2=0, \\
  x \cdot y=0,\\
  |x|^2+|y|^2=4.
\end{array} 
\right\}
\end{equation*}

\begin{enumerate}

\item  We first show that the induced complex structure $j$ sends the Liouville vector field $X$ to the Reeb vector field $jX$.\\

One can compute that the orthogonal complement to $T_{(x, y)}H_s$ at $(x, y) \in H_s$ is spanned by $(x, -y), (y, x), (x, y)$ or equivalently, $(x, 0), (0, y), (y, x)$. Note that the induced complex structure is the standard complex structure on $\C^n$. Therefore, one can compute the vector field $jX$ as follows:
\begin{equation*}
jX=j\left(\frac{1}{2}\left(x\frac{\partial}{\partial x}+ y\frac{\partial}{\partial y}\right)\right)=\frac{1}{2}\left(-y\frac{\partial}{\partial x}+x\frac{\partial}{\partial y}\right).
\end{equation*}

Notice that $jX=(-y, x)$ is tangent to the hypersurface $H_s$ for all $(x, y) \in H_s$. This follows from the following computation.
\begin{align*}
(-y, x) \cdot (x, 0) &= -y \cdot x = 0\\ 
(-y, x) \cdot (0, y) &= x \cdot y = 0\\
(-y, x) \cdot (y, x) &= |x|^2-|y|^2 = 0
\end{align*}

Moreover, $jX$ is the Reeb vector field for $\lambda$. To see this, 
\begin{align*}
i_{jX} \omega |_{H_s}= \omega(jX, \cdot)|_{H_s} &= \omega \left(\frac{1}{2}\left(-y\frac{\partial}{\partial x}+x\frac{\partial}{\partial y}\right), \cdot\right)|_{H_s}\\
&= \frac{1}{2}\left(-ydy-xdx\right)|_{H_s}\\
&= -\frac{1}{4} d(|x|^2+|y|^2)|_{H_s}= 0.
\end{align*}

It also satisfies the rescaling condition:
\begin{equation*}
\lambda(jX)=\frac{1}{2}(xdy-ydx)\left(\frac{1}{2}\left(-y\frac{\partial}{\partial x}+x\frac{\partial}{\partial y}\right)\right)=\frac{1}{4}(|x|^2+|y|^2)=1
\end{equation*}
since $|x|^2+|y|^2=4$ on $H_s$. Thus, the induced complex structure $j$ sends $X$ to the Reeb vector field $jX$.\\

\item Next we show that the contact distribution $\xi$ is preserved by $j$.\\

Firstly, note that $(jX)^{\perp}=\R<(x, 0), (0, y), (y, x), (-y, x)>^{\perp}$. For $(x, y) \in H_s$, we know that the tangent bundle $TH_s \perp \R<(x, 0), (0, y), (y, x)>$ and $\xi \subset TH_s$. Therefore, for any $(u,v) \in \xi$, we have 
\begin{align}
(u,v) \cdot (x, 0)&=u \cdot x=0, \label{eq1} \\
(u,v) \cdot (0, y)&=v \cdot y=0,\label{eq2} \\
(u,v) \cdot (y, x)&=u\cdot y+ v \cdot x=0. \label{eq3}
\end{align}  

Also, $\op{ker}(\lambda)$ is perpendicular to the Reeb vector field $jX$. Therefore, $\xi$ is spanned by the orthogonal complement of $(x, 0), (0, y), (y, x),$ and $(-y, x)$. That is,
\begin{equation*}
\xi=\R<(x, 0), (0, y), (y, 0), (0, x)>^{\perp}
\end{equation*}
when restricted to the hypersurface $H_s$. In addition to the relations above , we have 
\begin{eqnarray}
(u,v) \cdot (y, 0)=u \cdot y=0, \label{eq4}\\
(u, v) \cdot (0, x)=v\cdot x=0. \label{eq5}
\end{eqnarray}

Next we claim that this implies that the contact distribution is preserved by $j$, i.e., $j(\xi)=\xi$. To see this, let $(u, v) \in \xi$. Then $j(u, v)=(-v, u)$ and 
\begin{align*}
(-v, u) \cdot (x, 0)&=-v\cdot x=0 \text{ by Equation \ref{eq5}}\\
(-v, u) \cdot (0, y)&= u \cdot y=0 \text{ by Equation \ref{eq4}}\\
(-v, u) \cdot (y, 0)&= -v\cdot y=0 \text{ by Equation \ref{eq2}}\\
(-v, u) \cdot (0, x)&= u \cdot x=0 \text{ by Equation \ref{eq1}}
\end{align*}

Therefore, $(-v, u) \in \xi$. With that, we have verified that $\xi$ is preserved by the induced complex structure. 
\end{enumerate}

Therefore, when $H_s$ flows with respect to $X$, the induced complex structure $j$ becomes cylindrical at the ends of the singular fiber $Q_s$ which proves the claim.
\end{proof}

It is left to study the regular fiber case. Consider the regular fiber
\begin{equation*}
Q=\left\{\begin{array}{ll}
  |x|^2-|y|^2=1, \\
  x \cdot y=0
\end{array}
\right\}
\end{equation*}
near $Q_s$ and identify it with $\ct {n-1}$.
Unlike the singular case, the induced complex structure $j$ on $Q$ is not automatically cylindrical at the ends of $Q$. It is rather asymptotically cylindrical which follows along the same lines as \cite[Example 2]{B} by using a cylindrical coordinate chart on the symplectization of the unit sphere bundle $S\ct {n-1}$.\sm

This finishes the proof of Lemma \ref{lemma:inducedcpx}.
\end{proof}

\subsection{Regularity of $J$-holomorphic curves}\label{subsec:regularity} 
Here we recall the space $ \mathcal{J}^h(\widehat{W}^{2n})$ of almost complex structures compatible with the Lefschetz fibration $\hat{f}_n: \widehat{W}^{2n} \rightarrow \C$.

\begin{prop}\label{prop:regular}
There exists an almost complex structure $J \in \mathcal{J}^h(\widehat{W}^{2n})$ compatible with a suitable stable Hamiltonian structure $\mathcal{H}_0$ such that 
\begin{enumerate}
\item $\mathcal{H}_0$ is as desired in Lemma \ref{lemma:wendl}.
\item $J$ is an almost complex structure compatible with the symplectic connection away from the singular fibers such that the regular fibers of $f_n$ are $J$-holomorphic.
\item Each element in $\mathcal{M}_{\widehat{W}^{2n}}(J)$ lies in a fiber of $\hat{f}_n$.
\item $\mathcal{M}_{\widehat{W}^{2n}}(J)$ is regular. 
\item In a neighborhood of a singular fiber of $f_n$, $J$ is given by the gluing of the almost complex structure in a neighborhood of a critical point and a cylindrical product almost complex structure in the complement of the neighborhood of the critical point in the neighborhood of the singular fiber.
\end{enumerate}
\end{prop}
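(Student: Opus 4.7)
The plan is to build the almost complex structure $J$ in stages, starting from the output of Lemma \ref{lemma:wendl} and refining it to be compatible with the Lefschetz fibration $\hat{f}_n$, and then to prove regularity by an induction on $n$ that reduces to the dimension~$4$ case, where automatic transversality applies.

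I would first apply Lemma \ref{lemma:wendl} to $W^{2n}$ to obtain a stable Hamiltonian structure $\mathcal{H}_0$ and an initial $J^{(0)}\in\mathcal{J}^h(\widehat{W}^{2n})$ for which the fibers of $\hat{f}_n$ are holomorphic; this settles item (1). Away from the critical fibers, the symplectic form $\widehat{\omega}$ together with the vertical tangent bundle determines a horizontal distribution (the symplectic connection), and since $\mathcal{J}^h(\widehat{W}^{2n})$ is contractible I would deform $J^{(0)}$ on the preimage of the regular locus so that this horizontal distribution is $J$-invariant with $J$ restricting to the pullback of $j$ on $\C$, which yields (2). For (5), near each critical point $p$ the Morse normal form gives $f_n(z)=z_1^2+\cdots+z_n^2$, and Lemma \ref{lemma:inducedcpx} shows that the induced complex structure on the Milnor fiber $\ct{n-1}$ is asymptotically cylindrical; I would glue the standard $J_0$ on a small ball around $p$ to the cylindrical product structure on the complement of this ball via a cutoff supported in the collar where both descriptions agree up to the desired asymptotic tolerance, all performed within the contractible space $\mathcal{J}^h(\widehat{W}^{2n})$.

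For item (3) I would run a Liouville-type argument. For any $u\in\mathcal{M}_{\widehat{W}^{2n}}(J)$, the compatibility of $J$ with $\hat{f}_n$ makes $\hat{f}_n\circ u:\dot{F}\to\C$ holomorphic. At each puncture, $u$ converges to an asymptotic orbit that sits inside a single fiber of $f_n$, so $\hat{f}_n\circ u$ has a well-defined constant asymptotic value; the map therefore extends continuously over the compactification $F\cong S^2$ of $\dot{F}$, and by removable singularities together with Liouville's theorem this extension must be constant. Hence $u$ factors through a single fiber $f_n^{-1}(z)\cong W^{2n-2}$. Iterating this argument with $f_{n-1},\ldots,f_2$ ultimately confines $u$ to a nested fiber of real dimension $4$ carrying a planar Lefschetz fibration, which is the setting of the base case.

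Item (4) then follows by induction on $n$. The base case $n=2$ places $u$ in $\widehat{W}^4$ as an embedded, asymptotically cylindrical, planar curve in the symplectization of a $3$-manifold equipped with a stable Hamiltonian structure, so the automatic transversality theorem cited in Section \ref{modulisetup} yields surjectivity of the linearized Cauchy-Riemann operator. For the inductive step I would exploit the splitting coming from (3): the normal bundle to the fiber is the trivial complex line bundle $u^*\hat{f}_n^*T\C$, and compatibility of $J$ with $\hat{f}_n$ decomposes the linearized operator as $D_u=D_u^{\mathrm{v}}\oplus D_u^{\mathrm{n}}$; the vertical part is the linearized operator for $u$ viewed in $\widehat{W}^{2n-2}$, surjective by induction, while the normal part is $\bar{\partial}$ on a trivial complex line bundle over $F\cong S^2$ with vanishing asymptotic weights, hence also surjective since the relevant $H^{0,1}$ vanishes. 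The most delicate step, I expect, is the patching in (5): one has to glue the Milnor-model $J_0$, the cylindrical $\lambda$-compatible structure on the SHS neck, and the fibration-compatible structure on the bulk together inside $\mathcal{J}^h(\widehat{W}^{2n})$ while preserving the splitting $D_u=D_u^{\mathrm{v}}\oplus D_u^{\mathrm{n}}$ used in the inductive regularity argument; the asymptotic cylindricality established in Lemma \ref{lemma:inducedcpx} is what makes this interpolation possible, but controlling the cutoff so that the induced perturbation is of sufficiently low order to leave surjectivity undisturbed will require care.
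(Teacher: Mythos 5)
Your overall plan tracks the paper's proof quite closely: apply Lemma~\ref{lemma:wendl} to seed the construction, build $J$ by gluing a Milnor-model structure near critical points to a cylindrical product structure on the complement, use holomorphicity of $\hat f_n$ to confine curves to fibers, and reduce regularity to the four-dimensional base case by splitting. The Liouville/removable-singularity argument for item~(3) is equivalent to the paper's open-mapping-theorem argument and is fine. The main places you diverge, and where the proposal has gaps, are in items~(4) and~(5).

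For item~(4), you assert that ``compatibility of $J$ with $\hat{f}_n$ decomposes the linearized operator as $D_u = D_u^{\mathrm{v}} \oplus D_u^{\mathrm{n}}$.'' Away from the critical fibers this is reasonable, but near a critical point compatibility alone does not give a clean product splitting: the horizontal distribution is not flat, and the singular locus $\{w = z_n^2\}$ is curved in the base. The paper's key move, which your proposal omits, is the explicit biholomorphism $\Theta:(z_1,\dots,z_{n-1},z_n,w)\mapsto(z_1,\dots,z_{n-1},z_n,w-z_n^2)$, which straightens the singular locus to $\{w'=0\}$ and thereby makes the local model a genuine product $W^{2n-2}\times B_\delta$ on which $J$ becomes split $(j^{n-1},j_{\C})$. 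Without this step the splitting of $D_u$ that your induction relies on is not established. Relatedly, your claim that the normal $\bar\partial$ on the trivial line bundle ``is also surjective since the relevant $H^{0,1}$ vanishes'' glosses over a real subtlety: the asymptotic operator in the $\C$-normal direction is degenerate (no rotation along the trivialization), so Fredholm theory on $\dot F$ requires choosing exponential weights, and the cokernel dimension depends on that choice. Passing from $\dot F$ to $F\cong S^2$ implicitly fixes a weight, and this needs to be reconciled with the weights used elsewhere in the setup; the paper sidesteps this by relying on the product structure from $\Theta$ plus the gluing theorem rather than a weight-by-weight cokernel computation.

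Finally, you correctly flag the interpolation in item~(5) as delicate, but you do not address the specific degeneration the paper rules out: when interpolating $J$ across the buffer regions of the base between regular and singular neighborhoods, some $J_{t_*}$ in the family may fail to be regular and curves could break. The paper handles this with an argument that any end of a broken curve carrying a Reeb orbit must project to a point under $\hat f_n$, forcing it into a fiber, and then induction on dimension rules out nontrivial breaking because fibers are planar with no negative ends. Your proposal needs some such argument (or an appeal to the same open-mapping mechanism you used for item~(3)) to guarantee the interpolation stays within the class of regular almost complex structures and that the moduli space does not change topology across the family.
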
 

\begin{proof}  
The proof is structured in two parts. In Section \ref{constructionofJ}, we will construct an almost complex structure in $\mathcal{J}^h(\widehat{W}^{2n})$ that makes the fibers of $f_n$ $J$-holomorphic. In Section \ref{pfofregularity}, we will show that the almost complex structure we obtained in the first part of the proof makes the moduli space $\mathcal{M}_{\widehat{W}^{2n}}(J)$ regular. Both parts concern analysis around a critical point and a regular point of $f_n$ along with an extension analysis to the rest of the total space $W^{2n}$ of $f_n$.

\subsubsection{Construction of the suitable almost complex structure}\label{constructionofJ} We will start with the case $n=2$. Then there exists an almost complex structure on $W^4$ compatible with a suitable stable Hamiltonian structure such that $W^4$ is foliated by finite-energy planar $J$-holomorphic curves \cite[Main Theorem]{W2}. \sm

Next we will consider the case $n >2$ in two parts:\sm

\noindent \underline{Part I: Regular fibers}\sm

Recall that $f_n$ is an iterated planar Lefschetz fibration. Then there exists a compatible almost complex structure on each regular fiber $W^{2n-2}$ of $f_n$ such that $W^{2n-2}$ is $J$-holomorphic and is filled by planar $J$-holomorphic curves. Let $p \in \D$ be a regular value away from critical values of $f_n$ and consider a neighborhood $N_p$ of $p$. Above $N_p$, the almost complex structure is a product almost complex structure given by the almost complex structure on the fiber $W^{2n-2}$ and the lift of the complex structure on the base.\sm

\noindent \underline{Part II: Singular fibers}\sm

Recall that in a neighborhood of a critical point, the Lefschetz fibration $f_n: W^{2n} \rightarrow \D$ is given by the complex map 
\begin{align*}
\C^n &\rightarrow \C \\
(z_1, \dots, z_n) & \mapsto z_1^2+\dots+z_n^2
\end{align*}

By Lemma \ref{lemma:inducedcpx}, we know that the standard complex structure on $\C^n$ induces an asymptotically cylindrical complex structure in a neighborhood of a critical point. Therefore, $f_n$ is a holomorphic map around each critical point. See Section \ref{inducedcpx} for a more comprehensive discussion on the induced complex structure. \sm

Let $E_c$ be a neighborhood of a critical value $c \in \D$ of $f_n$ and $E=\ct {n-1}$ be a neighborhood of the critical point above $E_c$ such that $f_n^{-1}(E_c)-E$ looks like a product manifold. We can then pick a cylindrical product almost complex structure $J^c$ on the complement $f_n^{-1}(E_c)-E$ such that the the complement looks like a product almost complex manifold. \sm

The almost complex structure $j$ on $E$ can be described as follows: Away from a neighborhood of a critical point in $E$, the almost complex structure is the complex structure induced from $\C^n$ as described in Lemma \ref{lemma:inducedcpx}. At the critical point, it is given by a product almost complex structure as described in Section \ref{pfofregularity}.\sm

Since $j$ and $J^c$ have the same end behavior, the almost complex structure on each singular fiber can be obtained by the gluing $J^c \# J_R \# j$, where $J_R$ is a cylindrical almost complex structure on the neck region connecting $E$ with $f_n^{-1}(E_c)- E$. This concludes the existence of an almost complex structure on singular fibers of $f_n$.\\

Let $c_1, \dots, c_m$ be critical values of $f_n$ and $N_p$ be a neighborhood of a regular value $p\in \D$.

\begin{claim} The almost complex structure $J$ extends to $\hat{f}_n^{-1}(\D-\bigcup\limits_{s=1}^m E_{c_s}-N_p)$.
\end{claim}
\begin{proof}
Let $N=\hat{f}_n^{-1}(\D-\bigcup\limits_{s=1}^m E_{c_s}-N_p)$. To study this extension, we will examine the almost complex structure given by the transverse and fiber directions of $N$. To extend the regularity to the rest of the total space, we slowly interpolate between the regular almost complex structures we obtained on the regular and singular fibers. The change in almost complex structures in the transverse direction is almost constant when zoomed in. Hence, the almost complex structure is close to being a product structure since the variation in the transverse direction is negligible. \sm

Notice that the transverse direction, i.e. symplectic connection direction, of $N$ is given by the pullback of the complex structure on $\D$. To study the fiber direction, we recall that the space of almost complex structures compatible with the symplectic form on the fibers is contractible. Therefore, the extension in the fiber direction is automatic. Hence, the almost complex structure on a regular fiber is homotopic to the almost complex structure on the singular fiber. 

\begin{figure}[h]
\hspace{-1.2cm}
\begin{overpic}[scale=1.3]{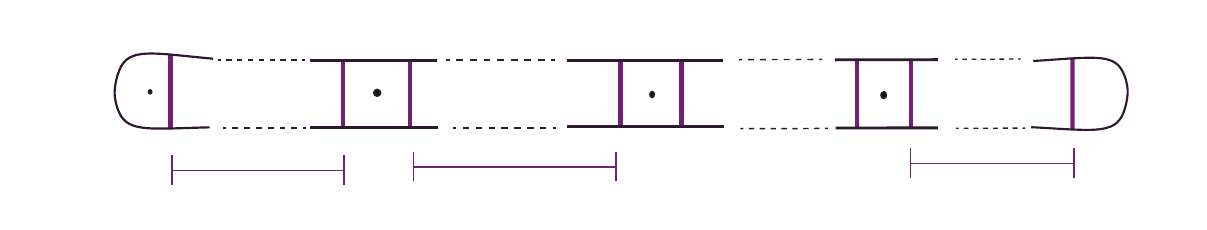}
\put(10.5,10.3){$p$}
\put(29,10.3){$c_1$}
\put(52,10,2){$c_2$}
\put(71.3,10,1){$c_m$}
\put(14.5, 2){Buffer region}
\put(95,7){$\D$}
\end{overpic}
\caption{Each $c_i \in \D$ represents a nondegenerate critical value for $i=1,\dots, m$ and $p \in \D$ is a regular value.}
 \label{buffer}
\end{figure}

Denote by $J^{n-1}$ the almost complex structure on $\widehat{W}^{2n-2}$ for $n \geq 2$. The almost complex structure on $$\hat{f}_{n}^{-1}(N_p) \cong \widehat{W}^{2n-2} \times N_p$$ will then look like a product almost complex structure $(J^{n-1}, j_{\D})$, where $j_{\D}$ is the pullback almost complex structure of the standard complex structure on $\D$.\sm

Consider the buffer region $[r_1, r_2] \times [0, 1]$ connecting the neighborhoods $N_p$ and $E_{c_1}$. Let $t \in [r_1, r_2]$ and $r_1-r_2 \gg 0$. Then the almost complex structure above the buffer region will look like the family $(J^{n-1}_t, j_{\D})$ given by the almost complex structure on the fiber of $\hat{f}_{n}$ and the pullback complex structure $j_{\D}$ since 
$$\hat{f}_{n}^{-1}([r_1, r_2] \times [0, 1])=\widehat{W}^{2n-2} \times [r_1, r_2] \times [0, 1].$$ 

Here, the almost complex structure switches from $J^{n-1}_{r_1}$ to $J^{n-1}_{r_2}$ as we move from $\{r_1\} \times [0, 1]$ to $\{r_2\} \times [0, 1]$. Notice that the change in $J^n$ in the $t$-direction is very slow above the buffer region. When we zoom in the buffer region, nothing changes locally. Hence, the almost complex structure on a regular fiber of $\hat{f}_n$ extends to $\hat{f}_n^{-1}(\D-\bigcup\limits_{s=1}^m E_{c_s}-N_p)$.
\end{proof}

Finally, one needs to check that no degenerations of curves, such as breaking, occur as we are changing the almost complex structure during this interpolation. 

\subsubsection{Proof of the regularity} \label{pfofregularity} 

Observe that the 1-parameter family of almost complex structures $\{J_t\}_{t\in[r_1, r_2]}$ is regular as a family since it is generic. However, at some point $t_{*} \in [r_1, r_2]$, $J_{t_{*}}$ might not be regular. At such points, there might be breaking of curves. In what follows, we argue that no such degeneration occurs. \sm

Consider the map $\hat{f}_3: \widehat{W}^6 \to \C$. Observe that the cylindrical ends in the fiber direction containing the Reeb orbits map down to a point. Hence, by the open mapping theorem, every end containing Reeb orbits has to live in a fiber since $\hat{f}_n$ is a holomorphic map.\sm

\begin{figure}[h]
\begin{overpic}[scale=1.2]{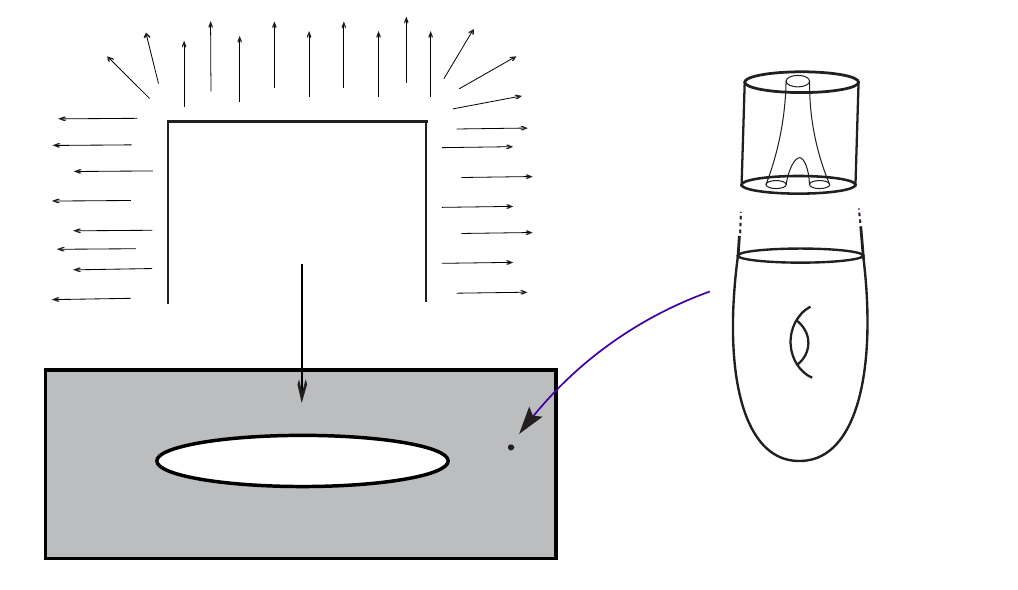}
\put(51, 5){$\C$}
\put(48, 13.5){$z$}
\put(86, 17){$\hat{\phi}^{-1}(z)$}
\put(77.5, 44){$u$}
\put(86, 45){$\R \times \partial W^{2n}$}
\put(20.5, 11.5){$S^1$}
\put(3, 48){$\R \times M$}
\put(26,26){$\hat{\phi}$}
\end{overpic}
\caption{The figure on the left is the map $\hat{\phi}: \R \times M \to \C$. The figure on the right is the fiber $\hat{\phi}^{-1}(z)$ over $z \in \C$ and a 2-level building depicting the breaking of curves. Here $u$ represents the end of a broken curve containing the Reeb orbit in $\widehat{W}^{2n}$.}
\label{breaking2}
\end{figure}

Let $u$ be a curve in the cylindrical end $\R \times \partial W^4$ containing the Reeb orbit depicted as in Figure \ref{breaking2}. Hence, by the discussion above, $u$ has image inside $W^4$ and $W^4$ admits a planar Lefschetz fibration whose fibers are planar surfaces with no negative ends. Therefore, no such $u$ exists unless it is a trivial cylinder and trivial cylinders are ignored in holomorphic buildings by convention.\sm

Observe that one can apply this construction to $W^{2n}$ for all $n \geq 2$ inductively since the map $\hat{f}_n$ is a projection map from the completion of $W^{2n}$ to a point for all $n$, Thus, we could go one dimension down to use the information that $u$ is contained in a fiber. This concludes that there exists no degenerations of curves as we are changing the almost complex structure during the interpolation mentioned in the claim above.\sm

Next we will examine the regularity of curves in a neighborhood of a regular fiber $W^{2n-2}$ lying above $N_p$. Recall that all $J$-holomorphic curves in $W^{2n-2}$ are transverse due to the inductive argument. Hence, the associated moduli space is regular away from critical points.\sm

The goal is now to construct an almost complex structure and do a similar regularity analysis as above in some neighborhood $E$ of a critical point. To do this, we will use the local model where $W^{2n}$ is identified with $\C^n$ and $f_n:W^{2n}\rightarrow \D$ has only one critical point. The local picture will then look like a conic bundle as in Figure \ref{conicbundle}. Note that one can extend this model case to the whole fibration by extending the base so that the base includes all critical points.
\begin{figure}[h]
\begin{overpic}[scale=.8]{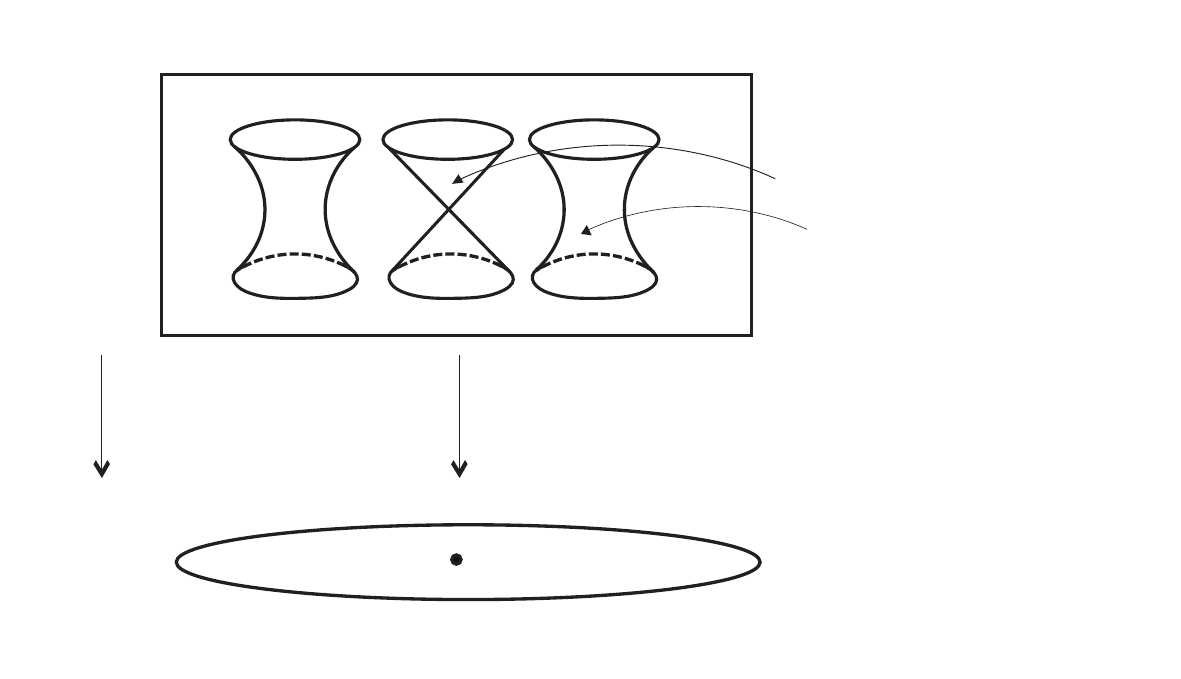}
\put(7.5,29){$\C^n$}
\put(8,9){$\C$}
\put(66,43){$\{z_1 ^2+\dots +z_{n-1} ^2=0\}$}
\put(69,36){$\{z_1 ^2+\dots +z_{n-1} ^2=w\}$}
\put(40,9){$0$}
\end{overpic}
\caption{Local picture representing the behavior of vanishing cycles above a small neighborhood of a critical point.}
\label{conicbundle}
\end{figure}

Let us begin with the case $n=3$. Consider the iterated planar Lefschetz fibration $f_3:W^6\rightarrow \D$, where each regular fiber $W^4$ of $f_3$ is foliated by planar $J$-holomorphic curves. We would like to understand how, under ideal conditions, the foliation of $W^6$ degenerates as each regular fiber $W^4$ of $f_3$ degenerates into a singular fiber. \sm

To see this, consider the following projection map
\begin{align*}
\varphi _3: \mathbb{C}^3 &\to \mathbb{C}, \\
(z_1, z_2, z_3) &\mapsto w=z_1^2+ z_{2}^2 +z_{3}^2,
\end{align*}
where $\varphi^{-1}_3 (w) =\{x+iy  \in \mathbb{C}^{3} \mid |x|^2 - |y|^2 =w \ \mbox{and} \  x\cdot y=0 \}$. One can similarly define the following projection map 
\begin{align*}
\pi: \C^3 &\to \C^2,\\
(z_1, z_2, z_3) &\mapsto (z_3, w),
\end{align*}
where $w=z_1^2+ z_{2}^2 +z_{3}^2$. Note that $\pi^{-1}(z_3, w)=\{z_1 ^2+z_2 ^2=w-z_3 ^2\}$ and $\pi^{-1}(0, 0)=\{z_1^{2}+z_2^2=0\}$. Thus, around each critical point of $f_3$, we have the local picture shown in Figure \ref{fibers}. \sm

\begin{figure}[h]
\vspace{2mm}
\begin{overpic}[scale=.9]{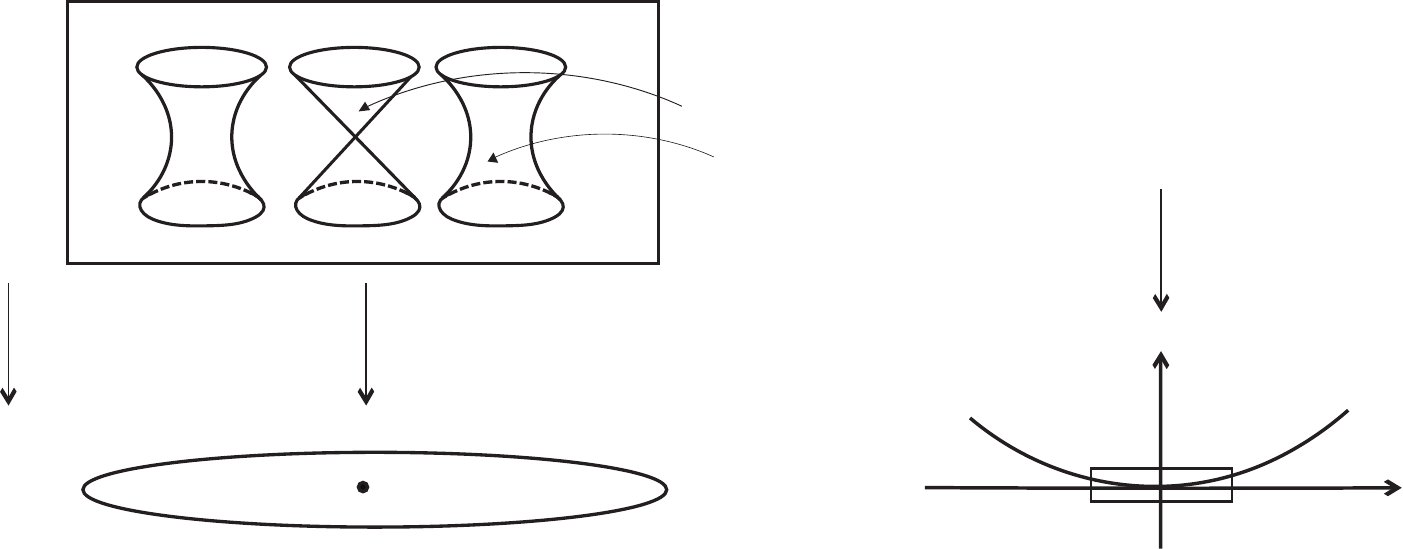}
    \put(49,31){$\{z_1 ^2+z_2 ^2+z_3 ^2=0\}$}    
	\put(49,26){$\{z_1 ^2+z_2 ^2+z_3 ^2=w\}$}    
	\put(82,27){$\C^3$}    	
	\put(0,3){$\C$}
	\put(62,3){$\C^2$}
	\put(27,3.5){$0$}
	\put(0,22){$\C^3$}
	\put(84,22){$\pi$}
		\put(27,15){$\varphi_3$}	
		\put(98,2){$z_3$}
		\put(84,12){$w$}
		\put(93,10){$w=z_3 ^2$}
    \vspace{5mm}
\end{overpic}
	\caption{The figure on the left is the projection map ${\varphi}_3$ onto $w$ whose regular fibers ${\varphi}_3^{-1}(w)$ are diffeomorphic to $T^*S^2$ and the figure on the right is the  projection map onto $z_3$ and $w$ whose regular fibers are $\{z_1 ^2+z_2 ^2=w-z_3 ^2\}$. Here ${\varphi_3} ^{-1}(0)$ is the singular fiber.}
    \label{fibers}
\end{figure}

Consider a sufficiently small box around the origin as depicted in Figure \ref{figure: projection pi}. Each vertical slice in that box will then represent the fiber $W^4$ of $f_3$. Recall that in order to show regularity, it is sufficient to prove the following local statement:
\begin{enumerate}
\item Each point on the vertical slice $W^4$ should be regular in that slice (See Figure \ref{figure: projection pi}). That is, all $J$-holomorphic curves in $W^4$ are transversely cut out.
\item There exists a biholomorphism such that $W^6{\cong} W^4 \times B_\delta$, where $B_\delta$ is a ball centered at 0 with radius $\delta$ in $\C$, for $\delta \geq 0$ small. 
\end{enumerate}

Second condition above implies that, in a sufficiently small box, the almost complex structure on $W^4$ which we will call $j^2\in \mbox{Aut}(TW^4)$ is invariant in the horizontal direction. That is to say, the almost complex structure on $W^6$ in some neighborhood of a critical point, which we will call $j^3\in \mbox{Aut}(TW^6)$, can be expressed as a split almost complex structure $(j^2, j_{\C})$, where $j_{\C}$ is the pullback of standard complex structure in $\C$, i.e., in the horizontal complex direction $z_3$.\sm

In light of the recall above, we need to write down an almost complex structure $j^3$ which works with the map $\mathbb{C}^3 \rightarrow \mathbb{C}$ such that 
\begin{enumerate}
\item if we take vertical slices in the box in Im($\pi$) as shown in Figure \ref{figure: projection pi}, then the slices will look like $T^*S^2$,
\item if we move in the horizontal direction $z_3$, then the almost complex structure is just a small variation of $j^3$ which is regular.
\end{enumerate}

Once we find such an almost complex structure $j^3$, we need to make sure that it extends to the rest of the total space. Firstly, observe that the standard complex structure on $\mathbb{C}^2$ induces a complex structure on $T^*S^2$. See Section \ref{inducedcpx} for details. Consider the projection map $\pi: \C^3 \to \C^2$ and a point $p\in \C^2$ close to the origin. We want to show that $\pi^{-1}(p)$ is transversely cut out with respect to the standard complex structure in $\C^3$. Observe that each vertical slice as shown in Figure \ref{figure: projection pi} is transversely cut out since the dimension of each vertical slice is $4$. In order to show that it is transversely cut out entirely, we need to show $j^2$ varies sufficiently slow in $z_3$-direction.\sm
\begin{figure}[h] 
\begin{overpic}[scale=1.7]{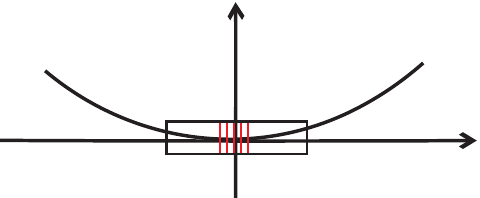}
    \put(90,27){$w=z_3 ^2$}
    \put(100,10){$z_3$}
	\put(48,42){$w$}    
	\put(2,2){$\C^2$}
    \vspace{5mm}
\end{overpic}
	\caption{The graph of the singular locus of the fibration $\pi$, where each red vertical line represents vertical slices in a small neighborhood of the origin.}
    \label{figure: projection pi}
\end{figure}

Let $\Theta: \C^4 \to \C^4$ be a biholomorphism defined as follows:
\begin{align*}
\Theta: (z_1, z_2, z_3, w) &\mapsto (z_1, z_2, z_3, w-z_3 ^2 = w'), \\
\{z_1 ^2 + z_2 ^2 + z_3 ^2=w\} &\mapsto \{z_1 ^2 + z_2 ^2=w' \}.
\end{align*}

The diffeomorphism $\Theta$ is equivalent to straightening out the singular locus where each vertical slice gets mapped to another vertical slice in the range as shown in Figure \ref{biholo}. Therefore, each vertical slice will be invariant in $z_3$-direction. Hence, the biholomorphism $\Theta$ makes the total space into a product which is sufficient to guarantee regularity.\sm

\begin{figure}[h]
\begin{overpic}[scale=.9]{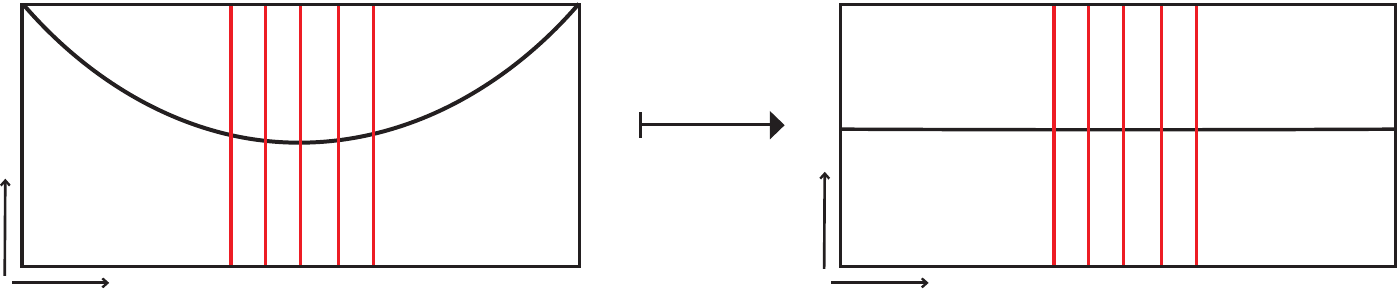}
	\put(50,13){$\Theta$}
	\put(-3,6){$w$}
  	\put(55,5,6){$w'$}
  	\put(6,-2){$z_3$}
  	\put(65,-2){$z_3$}  
  \end{overpic}
  \vspace{.5cm}
\caption{Biholomorphism $\Theta$ sending the singular locus $w=z_3 ^2$ to the horizontal line $w'=0$. Each box represents a small open neighborhood of singular locus around the origin. Red lines represent vertical slices.}
    \label{biholo}
\end{figure}

The existence of the biholomorphism $\Theta$ guarantees the fact that the induced complex structure $j^3$ on the total space is split in the neighborhood of a critical point and looks like symplectization at the ends. Therefore, all $J$-holomorphic curves are transversely cut out in a small neighborhood of a critical point. \sm

To extend the regularity in a neighborhood of a critical point to the rest of the neighborhood of the singular fiber which is a product manifold as explained in the first part of the proof, we assume that the generic cylindrical product almost complex structure $J^c$ we pick on the complement is regular. \sm

Let $j:= j^{n+1}= (j^{n}, j_{\C})$. By Theorem \ref{thm:gluing}, we can glue the regular almost complex structures $J^c$ and $j$ to get a regular almost complex structure on the neighborhood of the singular fiber. \sm

The construction above applies to all $n\geq3$ as the projection map $\pi$ onto the last two variables $z_n$ and $w$ will then correspond to the singular locus $w=z_n^ 2$ as in Figure \ref{fibers}, and thus, the induced almost complex structure will look like $j:= j^{n+1}= (j^{n}, j_{\D})$ for all $n\geq 2$. Thus, the regularity of the holomorphic curves follows. 

\end{proof}

\subsection{Gluing} \label{subsec:glu} 

The goal of this subsection is to state the gluing result that we need in Section \ref{ns} to examine the moduli space of planar $J$-holomorphic curves near a split holomorphic map in a $k$-level building. \sm

Consider the Lefschetz fibration $\hat{f}_n: \widehat{W}^{2n} \rightarrow \C$. Let $E_c$ be a neighborhood of a critical value $c \in \D$ of $f_n$ and $E$ be the corresponding neighborhood of the critical point above $E_c$.\sm

Recall that $j$ and $J^c$ are asymptotically cylindrical almost complex structures defined in a neighborhood of a critical point $E$ and $f_n^{-1}(E_c)- E$, respectively. Let $\SM_1$ denote the moduli space of asymptotically cylindrical planar $j$-holomorphic curves $u_1 : \dot{F}_1 \rightarrow \hat{E} \cong \ct n$. Similarly, denote by $\SM_2$ the moduli space of asymptotically cylindrical planar $J^c$-holomorphic curves $u_2 : \dot{F}_2 \rightarrow \hat{f}_n^{-1}(E_c)- E$.\sm

Let $\dot{F}=\dot{F}_1 \# \dot{F}_2$. Denote by $\mathcal{G}_{\delta}(\SM_1, \SM_2)$ the set of $J$-holomorphic maps $u: \dot{F} \to \hat{f}_n^{-1}(E_c)$ that are $\delta$-close to breaking into $u_1$ and $u_2$ in the sense of \cite{HT1}. \sm

Here, the almost complex structure $J$ on $u(\dot{F})$ is the gluing $J^c \# J_R \# j$, where $J_R$ is the almost complex structure on the neck region connecting $\hat{E}$ with $\hat{f}_n^{-1}(E_c)- E$. Then we state the following gluing result.

 \begin{thm} \label{thm:gluing}
 For sufficiently small $\delta >0$, there exist a sufficiently large $R$ and a gluing map
 \begin{align*}
 G: \SM_1 \times \SM_2 & \rightarrow \mathcal{G}_{\delta}(\SM_1, \SM_2)\\
 (u_1, u_2) & \mapsto u
 \end{align*}
which is a diffeomorphism onto its image. 
 \end{thm}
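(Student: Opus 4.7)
The plan is to follow the standard SFT gluing scheme as developed by Hutchings--Taubes \cite{HT1} and Bourgeois--Mundet-i-Riera, adapted to the Lefschetz fibration setting at hand. Because both constituent moduli spaces $\SM_1$ and $\SM_2$ are cut out transversely (by Proposition \ref{prop:regular}), and because each curve $u_i$ is asymptotically cylindrical along matching Reeb orbits, the gluing reduces to a parametrized implicit function theorem argument. The neck region $[-R,R]\times M$ between $\hat{E}$ and $\hat{f}_n^{-1}(E_c)-E$ carries a cylindrical almost complex structure $J_R$, which is precisely where the interpolation is performed.

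First, I would carry out the pre-gluing construction. Given $(u_1,u_2)\in\SM_1\times\SM_2$ whose positive ends of $u_2$ match the negative ends of $u_1$ over common Reeb orbits, I choose exponential decay coordinates $(s,t)\in[0,\infty)\times S^1$ on each cylindrical end and write $u_i$ as a perturbation of the corresponding trivial cylinder over a closed Reeb orbit. Using a smooth cutoff function supported on $[-R,R]\times M$, I splice the two maps together to obtain an approximately $J$-holomorphic map $u_R:\dot{F}_1\#_R\dot{F}_2\to \hat{f}_n^{-1}(E_c)$. A direct estimate shows that $\|\bar\partial_J u_R\|_{L^p}=O(e^{-cR})$ for some $c>0$ depending on the spectral gap of the asymptotic operators at the matching Reeb orbits.

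Second, I set up the linearized Cauchy--Riemann operator $D_{u_R}$ on a suitable weighted Sobolev completion of sections of $u_R^*T\hat{f}_n^{-1}(E_c)$, with exponential weights at the punctures chosen below the first positive eigenvalue of the asymptotic operator. The transversality of $u_1$ and $u_2$ means each linearized operator $D_{u_i}$ is surjective; a standard ``pregluing of right inverses'' argument then produces a uniformly bounded right inverse $Q_R$ for $D_{u_R}$, bounded independently of $R$. Combined with the quadratic estimate for the nonlinear term, the Newton iteration scheme yields a unique small solution $\eta_R\in\ker Q_R^\perp$ of $\bar\partial_J(\exp_{u_R}\eta_R)=0$, giving an honest $J$-holomorphic curve $u=G(u_1,u_2)$. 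Smoothness of $G$ follows from the smoothness of the parameter-dependent implicit function theorem applied to the total family over $\SM_1\times\SM_2$.

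Finally, to see that $G$ is a diffeomorphism onto $\mathcal{G}_\delta(\SM_1,\SM_2)$ for small $\delta$ and large $R$, I argue injectivity by uniqueness in the implicit function theorem together with the fact that distinct $(u_1,u_2)$ produce pre-glued approximations that are uniformly far apart in the $\delta$-close topology. Surjectivity onto $\mathcal{G}_\delta$ uses the SFT compactness theorem \cite{BEHWZ}: any $u\in\mathcal{G}_\delta(\SM_1,\SM_2)$ is, by definition, $\delta$-close in the Gromov--Hofer sense to a broken configuration $(u_1,u_2)$, and the uniqueness clause of the implicit function theorem identifies it with the output of $G$. The main technical obstacle will be establishing the $R$-uniform bound on $\|Q_R\|$ in the presence of multiple matching ends and the automorphism action on $\dot{F}_1\#_R\dot{F}_2$; this requires carefully choosing the weight exponents and quotienting by the gluing parameters (the angular matching parameters at each pair of punctures) to avoid cokernel contributions from reparametrizations. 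Once this is in hand, the result follows exactly as in \cite{HT1}.
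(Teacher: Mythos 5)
Your proposal is correct and takes essentially the same route as the paper: the paper's entire proof is a single sentence observing that since the curves are not multiply covered, the statement follows from the gluing results of \cite{Pa} and \cite{BH}. Your sketch is a faithful expansion of what such a reference proof actually does (pregluing, weighted Sobolev setup, uniformly bounded right inverse from transversality of the factors, Newton iteration, then injectivity/surjectivity from uniqueness and SFT compactness), so there is no genuine difference in method, only in the level of detail and in which standard references are invoked (\cite{HT1} and \cite{BEHWZ} versus \cite{Pa} and \cite{BH}). One small point worth making explicit, since the paper leans on it: the hypothesis that the curves are somewhere injective (not multiply covered) is precisely what licenses the classical IFT gluing you describe rather than a virtual/obstruction-bundle variant; in your sketch this is implicit in the appeal to surjectivity of $D_{u_i}$ via Proposition \ref{prop:regular}, but it deserves to be named as the reason the quadratic estimates and right-inverse bounds close up without further correction terms.
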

 
  Since the curves of interest are not multiply covered, the proof of this result is a consequence of the gluing results provided in \cite{Pa} and \cite{BH}.

\subsection{Neck stretching} \label{ns} In this subsection, we will study the degeneration of a regular fiber into a singular fiber via neck-stretching and show that the filling by planar $J$-holomorphic curves in the completion of $W^{2n}$ restricts to a foliation on $\C^n$ after neck-stretching on $E$. Hence, one can examine the structure of $J$-holomorphic curves by looking at $E$ while passing through the critical point. \sm

A priori, we have no control over $J$-holomorphic curves in the regular fiber $W^{2i}$ when they get pinched at the singular fiber. Therefore, one needs to analyze the degeneration of $J$-holomorphic curves across the singular fiber. Most reasonable procedure to study this is to use neck-stretching argument in SFT \cite{EGH}. Recall that neck-stretching is a deformation of an almost complex structure in some neighborhood of a contact hypersurface in $\widehat{W}$: See Section \ref{NS} and \cite{EGH} for a more detailed discussion. \sm

In what follows, we will apply neck-stretching to a neighborhood $E$ of a critical point in $W^{2n}$. In this way, $J$-holomorphic curves in this region will be forced to look like standard annuli when stretching is complete.  \sm

Consider the case $n=3$ and the map 
\begin{align*}
\varphi _3: \mathbb{C}^3 &\to \mathbb{C}, \\
(z_1, z_2, z_3) &\mapsto z_1^2+ z_{2}^2 +z_{3}^2
\end{align*} 
whose fibers are $\ct 2 \subset W^{4}$. Recall that the Liouville form $$\lambda=\dfrac{1}{2} \sum_{i=1}^2 (x_idy_i-y_idx_i)$$ on $T^*S^2$ induces a contact form on its boundary $ST^*S^2$. Recall also that
$$ST^*S^2\cong L(2,1)\cong S^3/\mathbb{Z}_2 \cong \mathbb{R}P^3.$$  

We take a small neighborhood $[-\epsilon, \epsilon] \times \R P^3$ of $\R P^3$. Following \cite{EGH} and \cite{HWZ2}, we can replace $[-\epsilon, \epsilon] \times \R P^3$ with $$[-t-\epsilon, t+\epsilon] \times \R P^3,$$ which we will call the neck region, since the almost complex structure on $[-\epsilon, \epsilon] \times \R P^3$ is chosen to be invariant under the Liouville flow. We also define the associated almost complex structure $J_t$ with the following properties:
\begin{enumerate}
\item It is invariant on $[-t-\epsilon, t+\epsilon] \times \R P^3$.
\item It agrees with the induced complex structure $j$ on $\ct 2$ elsewhere.
\end{enumerate}

Since $J_t$ is \textit{$(\RP^3, \lambda)$-compatible} on the neck region, one can send $t \to \infty$. As we take the limit, the almost complex structure $J_t$ will start deforming so that in the limit it breaks $W^4$ into the following two noncompact manifolds with cylindrical ends:
\begin{itemize}
\item $\D^*S^2 \cup [0, \infty) \times \R P^3 \cong \ct 2$ 
\item $\widebar{W}^4=W^4$ $\setminus$ $\ct 2$
\end{itemize}

One can then use SFT compactness \cite{BEHWZ} to observe that the $J$-holomorphic curves in $\widehat{W}^4$ converge to a holomorphic building. \sm

One can then conclude that $\widebar{W}^4$ is foliated by planar $J$-holomorphic curves away from a set of binding orbits of $\partial W^4$. Hence, filling by planar $J$-holomorphic curves in the completion of $W^{2n}$ restricts to the foliation on $T^*S^n$ after neck-stretching in a neighborhood of a singular fiber. \sm

Recall that under the identification of $\ct n$ with the tangent bundle $TS^n$ determined by a fixed a Riemannian metric $g$ on $S^n$, one can observe that the Reeb vector field on $ST^*S^n$ corresponds to the geodesic flow on $S^n$.  Thus, the Reeb orbits are closed geodesics of $S^n$. \sm

Observe that this is a highly degenerate situation since these closed orbits, i.e. geodesics, are not isolated, but come in families. That is, there exists a Morse-Bott family of closed Reeb orbits. This degenerate situation can be converted into a nondegenerate situation by perturbing our contact form in such a way that it induces the same contact structure on the unit sphere bundle $ST^*S^n$ whose Reeb flow generates the desired nondegenerate orbits. \sm

Let us begin our analysis with the case $n=2$. Recall that the closed Reeb orbits of $ST^*S^2$ are closed geodesics of $S^2$. We wish to show that there exists a contact form on $ST^*S^2\cong \mathbb{R}P^3$ whose associated Reeb vector field has precisely two nondegenerate closed orbits of Conley-Zehnder index 1 to obtain a moduli space of planar pseudoholomorphic curves with (Fredholm) index 2. \sm

We consider the class of $j$-holomorphic curves in $\ct 2$ that are asymptotic to 2 closed nondegenerate Reeb orbits. Call these two orbits $\gamma^+_1$ and $\gamma^+_2$ analogously defined as $\gamma$ in Section \ref{section:setup}. Let $u$ be a planar $j$-holomorphic curve for the Lefschetz fibration $\ct 2 \to \D$ with fibers $\ct 1$. Then
$$\operatorname{ind}(u)=(n-3)\chi(\dot{F})+\mu_{\text{CZ}}(\gamma^+_1, \tau)+\mu_{\text{CZ}}(\gamma^+_2, \tau).$$

Observe that $\chi(\dot{F})=2-2g-k=2-0-2=0$. In order for these curves to be of index 2, the following should hold:
$$\mu_{\text{CZ}}(\gamma^+_1, \tau)=\mu_{\text{CZ}}(\gamma^+_2, \tau)=1.$$
with respect to a properly chosen trivialization $\tau$.  Therefore, we need to perturb our contact form so that the resulting closed Reeb orbits are of index 1. This perturbation is crucial for determining the following: when we stretch the neck, we have a priori no information about what collection of orbits the curve in $T^* S^2$ limits to. To remedy the situation, we will use the perturbation of the contact form described in \cite[Section 2]{W2} which has been constructed by using the abstract open book of the associated planar contact manifold. Therefore, $\ct 2$ region is foliated by cylindrical $j$-holomorphic curves asymptotic to closed Reeb orbits of \textit{Conley-Zehnder index 1}. \sm

Note that one can extend this procedure inductively to higher dimensions by using the iterative argument on $\ct n$. Finally, we need to glue back the cylindrical ends of each level in the holomorphic building to study the foliation of $\widehat{W}^4$ by using Theorem \ref{thm:gluing}. \sm

Note also that one can apply this construction to further generalize it to $\widehat{W}^{2n}$, for $n \geq 2$, inductively by using the facts that each $W^{2i}$ is a fiber of the Lefschetz fibration on $W^{2i+2}$ for all $i\geq 2$ and the evaluation map for $\widehat{W}^{2i}$ is of degree 1 for each $i > 2$.\sm

\subsection{Proof of Proposition \ref{main_prop}} In this section, we will use the results of the previous sections and prove Proposition \ref{main_prop} by arguing that the evaluation map 
\begin{align*}
\operatorname{ev}: \mathcal{M}_{\widehat{W}^{2i}}(J) &\to \widehat{W}^{2i}, \\
(u, z) &\mapsto u(z)
\end{align*}
 is of degree 1 for $i \geq 2$, i.e., through any generic point $p \in \widehat{W}^{2i}$, the mod 2 count of $\operatorname{ev}^{-1}(p)$ is 1. Consider $W^{2i}=f_i^{-1}(p)$. When $i=2$, one can make a stronger statement, i.e., the completion $\widehat{W}^4$ admits a foliation by embedded finite-energy planar $J$-holomorphic curves \cite{W2}.\sm

Now assume that $\widehat{W}^{2i}$ is filled by planar $J$-holomorphic curves and show that $\widehat{W}^{2i+2}$ is also filled by planar $J$-holomorphic curves. Since $f_{i+1}$ is an iterated planar Lefschetz fibration, we can conclude that each regular fiber of $\hat{f}_{i+1}$ is filled by planar $J$-holomorphic curves for $i\geq2$.\sm

In Sections \ref{subsec:regularity}, \ref{subsec:compactify}, and \ref{indexformula}, we showed that near a critical point, the moduli space of planar $J$-holomorphic curves in $\widehat{W}_s^{2i+2}$ is still regular, compact and has the expected dimension $2i$ when ${\mathcal{M}}_{\widehat{W}_s^{2i}}(J)$ is regular, compact and has the expected dimension. Then the evaluation map on ${\mathcal{M}}_{\widehat{W}_s^{2i+2}}(J)$ is still a degree 1 map and therefore $\widehat{W}^{2i+2}$ is filled by planar $J$-holomorphic curves .\sm

In light of the discussion above, we can use the evaluation map

\begin{align*}
\mbox{\text{ev}}: {\mathcal{M}}_{{\widehat{W}}^{2n}}(J) &\longrightarrow \widehat{W}^{2n},\\
(u, z)&\longmapsto u(z),
\end{align*}
where $z$ is a marked point in $W^{2n}$, to show that there exists a unique curve passing through each generic point in $\widehat{W}^{2n}$ up to algebraic count since the completion of each regular and singular fiber of $f_n: W^{2n} \to \D$ is filled by planar $J$-holomorphic curves. Hence, the pages $W^{2n}$ of the open book supported by the contact manifold $M$ are filled by planar $J$-holomorphic curves. This finishes the proof of Proposition \ref{main_prop}.

\qed

\subsection{Proof of the Main Theorem} In this section, we use notions such as $(M, \lambda)$-compatible and $(M, \lambda)$-neck-stretching. For their definitions, we refer the reader to Definition \ref{defn:Mcomp} and \ref{defn:NSofM}, respectively.\sm

\begin{lemma} There exists an almost complex structure $J_0$ on $\R \times M$ compatible with a suitable stable Hamiltonian structure $\mathcal{H}_0$ as in Lemma \ref{lemma:wendl} such that the $J$-holomorphic curves in the pages $W^{2n}$ of the open book on $M$ lift to $J_0$-holomorphic curves in $\R \times M$ with index $2n$.
\end{lemma}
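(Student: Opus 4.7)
The plan is to use the stable Hamiltonian structure $\mathcal{H}_0$ and the $\Omega$-compatible almost complex structure produced by Lemma \ref{lemma:wendl}, arranged so that every fiber $\{s\}\times\{\theta\}\times F$ of the mapping torus portion of $\R\times M = \R\times F_{\Phi}\cup\R\times(\D\times B)$ is $J_0$-holomorphic and so that $J_0$ restricted to the tangent bundle of such a page coincides with the fiberwise almost complex structure $J\in\mathcal{J}^h(\widehat{W}^{2n})$ used to define $\mathcal{M}_{\widehat{W}^{2n}}(J)$ in Proposition \ref{main_prop}. Given such a $u:\dot{F}\to\widehat{W}^{2n}$, I would define the lift $\tilde{u}:\dot{F}\to\R\times M$ by identifying the compact part $W^{2n}$ of $\widehat{W}^{2n}$ with a fixed page at some $(s_0,\theta_0)$ in $\R\times F_{\Phi}$, and identifying the cylindrical end $[0,\infty)\times\partial W^{2n}$ of $\widehat{W}^{2n}$ with a neighborhood of the binding in $\R\times M$ via the $J_0$-holomorphic cylinders explicitly constructed in the proof of Lemma \ref{lemma:wendl}. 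Since $J_0$ preserves the page tangent bundle and agrees there with $J$, and since the binding cylinders glue smoothly to the page, $\bar{\partial}_J u=0$ forces $\bar{\partial}_{J_0}\tilde{u}=0$.

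For the Fredholm index of $\tilde{u}$ in $\R\times M$, which has complex dimension $n+1$, I would apply the usual formula
\begin{equation*}
\operatorname{ind}(\tilde{u}) = (n-2)\chi(\dot{F}) + \sum_i \mu_{\text{CZ}}(\gamma_i^+,\tau) + 2c_1(\tilde{u}^{*}T(\R\times M),\tau),
\end{equation*}
with $\chi(\dot{F})=2-k$ for $k$ positive punctures and no negative ones. The key change compared to the page computation in Section \ref{indexformula} is that the Reeb orbit $\gamma$ is now seen inside the $2n$-dimensional contact distribution on $M^{2n+1}$ rather than the $(2n-2)$-dimensional one on $\partial W^{2n}$; using the trivialization $\tau$ that extends the longitudinal page framing by the natural complex framing of the extra $\D$-factor normal to $\partial W^{2n}$ in $M$, the Reeb flow transverse to the binding contributes one additional small positive rotation, so $\mu_{\text{CZ}}(\gamma,\tau) = (n-1)+1 = n$. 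The normal bundle of $\tilde{u}$ in $\R\times M$ equals the normal bundle of $u$ in $\widehat{W}^{2n}$ plus the trivial complex line spanned by $\partial_s$ and $R_{\mathcal{H}_0}$, so the relative Chern number is unchanged: $c_1(\tilde{u}^{*}T(\R\times M),\tau) = c_1(T\dot{F},\tau) = 2-k$. Substituting gives
\begin{equation*}
\operatorname{ind}(\tilde{u}) = (n-2)(2-k) + kn + 2(2-k) = 2n,
\end{equation*}
as claimed.

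The hard part will be the careful end analysis: verifying that under the identification above the cylindrical end of $\widehat{W}^{2n}$ really is matched to a neighborhood of the binding of $M$ in a way that preserves the asymptotic behavior of $u$, and that the extension of the trivialization $\tau$ to $\R\times M$ is set up so that the Conley-Zehnder index gains exactly one rotation from the transverse direction rather than some larger integer. Both points reduce to the precise form of the stabilizing Liouville vector field $Z$, the associated stable Hamiltonian structure near the corner smoothing, and the Morse-Bott perturbation of the contact form near the binding used in Section \ref{ns}, all of which are tools already established in Lemma \ref{lemma:wendl} and the neck-stretching discussion.
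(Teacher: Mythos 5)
Your index computation matches the paper's and is correct: with $\dim_\C(\R\times M)=n+1$, $\mu_{\mathrm{CZ}}(\gamma,\tau)=n$ under the trivialization you describe, $c_1(\tilde u^*T(\R\times M),\tau)=\chi(\dot F)=2-k$, and the Fredholm formula gives $(n-2)(2-k)+kn+2(2-k)=2n$.

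However, the construction of $J_0$ itself has a genuine gap. You write that $J_0$ should be ``arranged so that every fiber $\{s\}\times\{\theta\}\times F$ ... is $J_0$-holomorphic and so that $J_0$ restricted to the tangent bundle of such a page coincides with the fiberwise almost complex structure $J$.'' But this is impossible for a nontrivial monodromy: the mapping torus $M_h=[0,1]\times W^{2n}/(0,h(z))\sim(1,z)$ identifies the page at $\theta=0$ with the page at $\theta=1$ via $h$, so a globally well-defined $J_0$ that restricts to the same $J$ on every page would require $h_*(J)=J$, which one has no right to assume. The paper confronts exactly this point: as you traverse $S^1$ the fiberwise almost complex structure varies from $J$ over $p$ to $h_*(J)$ over $q$, and one must interpolate through a one-parameter family $J_\theta$ of almost complex structures on the page and verify that the moduli spaces $\mathcal{M}_{\widehat W^{2n}}(J_\theta)$ remain regular and fill, so that the degree-one count from Proposition \ref{main_prop} persists along the path $\sigma$. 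This interpolation step (together with checking that no breaking occurs while the family varies, as discussed in Sections \ref{pfofregularity} and \ref{subsec:compactify}) is the heart of the lemma, and your sketch omits it entirely. Your lifting mechanism of ``identify $W^{2n}$ with a fixed page at $(s_0,\theta_0)$'' only produces a lift into a single slice, not the required foliation-style family of lifts over the whole mapping torus, and in particular does not establish that $J_0$ is consistently defined on $\R\times F_\Phi$. The binding extension via Wendl's cylinders is fine and matches the paper, and your flagged concerns about the end analysis and the Conley--Zehnder count are the right secondary issues, but the monodromy interpolation must be added for this proof to go through.
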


\begin{proof}
Pick an almost complex structure $J$ on the pages $\widehat{W}^{2n}$ of the open book supported by $M$ given by Proposition \ref{prop:regular}. Let $h: W^{2n} \to W^{2n}$ be the monodromy map of the open book on $M$ and $$M_h=[0, 1] \times W^{2n}  \bigm/ (0, h(z)) \sim (1, z)$$ be the associated mapping torus which is a smooth $(2n+1)$-dimensional manifold with boundary. Then there exists a natural fibration $$\phi: M_h \to S^1$$ whose fiber is $W^{2n}$. When we move along the base $S^1$ as depicted in Figure \ref{interpol}, the almost complex structure $J$ on each fiber starts changing due to the monodromy map $h$. By the time we come to the point $q\in S^1$, the almost complex structure $h_{*}(J)$ on the fiber over $q \in S^1$ is not the same as the almost complex structure $J$ over $p \in S^1$. 

\begin{figure}[h]
\begin{overpic}[scale=.7]{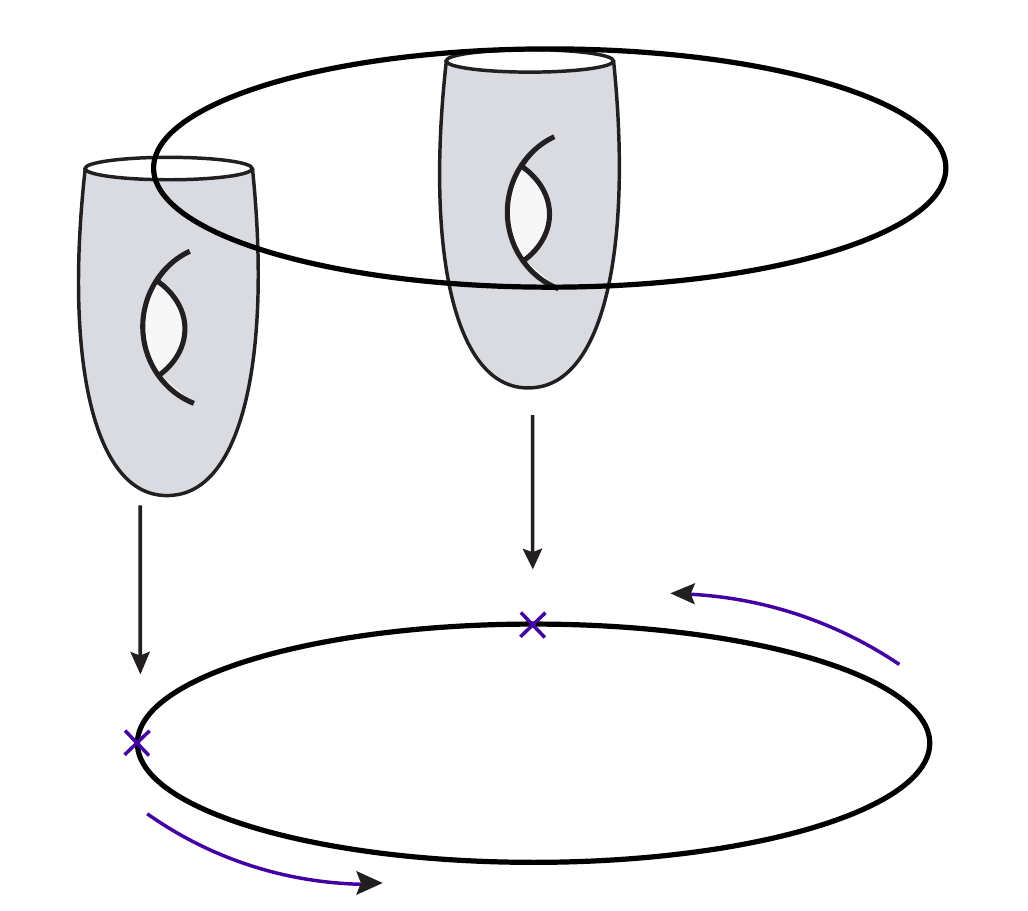}
\put(95, 15){$S^1$}
\put(95, 71){$M_h$}
\put(9, 16){$p$}
\put(51, 23){$q$}
\put(20, 1){$h$}
\put(5, 45){$J$}
\put(32.5, 52){$h_{*}(J)$}
\end{overpic}
\caption{The figure representing the action of the monodromy on the fibers $W^{2n}$ and the associated almost complex structures. Here $p$ and $q$ represent points on $S^1$. The induced map $h_{*}:TW^{2n} \to TW^{2n}$ is identity on a collar neighborhood $\mathcal{N}(\partial M)$ of $\partial M$ while fixing $J=h_{*}(J)$ on $\mathcal{N}(\partial M)$.}
\label{interpol}
\end{figure}

Let $\sigma: [0, 1] \to S^1$ be a path with $\sigma(0)=q$ and $\sigma(1)=p$. Since there exists a curve through each point over the path $\sigma$ on $M_h$ by Proposition \ref{main_prop}, we can interpolate the almost complex structure over $\sigma$. Note that we cannot explicitly construct this interpolation. However, one can use the evaluation map over this path to show that, geometrically through each point, there exists a curve. Hence, the interpolation follows. \sm

Now we need to check if the almost complex structure $J$ on the mapping torus $M_h$ extends to the symplectization $\R\times (B \times \D)$ of a neighborhood of the binding $B=\partial(W^{2n})$. 

To do this, we need to find a foliation of $\R\times (B \times \D)$ by $J$-holomorphic cylinders asymptotic to the Reeb orbit $\gamma^{+}$ (See Section \ref{section:setup} for the description of the Reeb orbit $\gamma^{+}$) that fit smoothly with the $J$-holomorphic curves in the fiber $\{\text{constant}\} \times W^{2n}$. To see this, we refer the reader to \cite[Page 7]{W2} for a detailed analysis near the binding. One can then attach the symplectization of $$\underbrace{\{0\} \times \{0\} \times \dots \times \{0\}}_{{(n-1)}\text{-many}} \times \gamma^+$$ to the holomorphic fiber $\{\text{constant}\} \times W^{2n} \subset \R \times M_h$ which results in an extension of a $J$-holomorphic fiber in $\R \times M_h$ to a $J_0$-holomorphic fiber in $\R \times M$. \sm

Hence, given a stable Hamiltonian structure $\mathcal{H}_0$ as in Lemma \ref{lemma:wendl}, $J_0$ is an almost complex structure on $\R \times M$ compatible with $\mathcal{H}_0$ such that the $J$-holomorphic curves in the pages $W^{2n}$ of the open book on $M$ lift to $J_0$-holomorphic curves in $\R\times M$. This completes the construction of $J_0$-holomorphic curves on $\R \times M$ except for the index calculation.\sm

Next we compute this index. Let $u \in \mathcal{M}_{\R \times M}(J_0)$ be a planar $J_0$-holomorphic curve in $\R \times M$. Recall that the expected dimension of the moduli space of $J_0$-holomorphic curves in $\R \times M$ is the Fredholm index of $\bar{\partial}_{J_0} u$. This index is

\begin{equation}\label{indformula}
\mbox{ind}(u)=((n+1)-3)\chi (\Sigma)+\sum \limits_{\gamma^+} \mu_{\text{CZ}}(\gamma ^+, \tau)-\sum \limits_{\gamma^-}\mu_{\text{CZ}}(\gamma^-, \tau)+2c_1(u^{*}T(\R \times M))
\end{equation} 
where $\gamma^+$ and $\gamma^-$ are positive and negative ends of $u({\dot{F}})$, respectively and $\tau$ is the trivialization of $\xi$ along $\gamma^{\pm}$. Note that the Conley-Zehnder index is independent of the choice of $\tau$. The dimension of the ambient manifold is $(2n+2)$, which explains the term $((n+1)-3)$.\sm

Let $k$ be the number of positive ends of $u(\dot{F})$. Recall that we do not have any negative ends. Therefore, $\mu_{\text{CZ}}(\gamma^-, \tau)=0$. Let us begin with the case $n=2$. With respect to the trivialization $\tau$ for which boundary of the pages are longitudinal, the Conley-Zehnder index of each of the Reeb orbits is $\mu_{\text{CZ}}(\gamma^+, \tau)=1$ for the elliptic case. \sm

The relative first Chern number of the bundle $u^{*}T(\R \times M) \to \dot{F}$ with respect to the trivialization $\tau$ is
$$
c_1(u^{*}T(\R \times M))=c_1(T\dot{F} \oplus N_u)=c_1(T\dot{F})+c_1(N_u)
$$
where $N_u$ denotes the normal bundle to $u$. Observe also that transverse direction is trivial. Thus, we have $$c_1(u^{*}T(\R \times M))=c_1(T\dot{F}).$$

It remains to analyze the tangential component of $c_1$. Note that the relative Chern class of a planar surface $\dot{F}$ with $k$ positive ends is given by
\begin{equation*}
<c_1(T\dot{F}, \tau), \dot{F}>=<e(\dot{F}), \dot{F} >=\chi(\dot{F})=2-k.
\end{equation*}

Having determined the relative first Chern number, we shall compute the index for $n=2$: 
\begin{align*}
\text{ind}(u)&=(2-3)(2-0-k)+k-0+2(c_1(T\dot{F}))\\
&=-1(2-k)+k+2(2-k)=2.
\end{align*}

Now consider the case when $u: (\dot{F}, j) \longrightarrow (\R \times M, J_0)$ and dim$_{\R}(\R \times M)=2n+2$. Consider also a projection map from a neighborhood $\mathcal{N}(M^{2n-3})$ of $M^{2n-3}$ to $\D$. This map gives a trivialization $\tau$ of $\mathcal{N}(M^{2n-3}) \cong M^{2n-3} \times \D$. We then obtain $$\mu_{\operatorname{CZ}}(\gamma^+, \tau)=(n+1)-1=n.$$ Thus, Equation \ref{indformula} implies that  

\begin{align*}
\text{ind}(u)&=((n+1)-3)\chi(\dot{F})+k\cdot n-0+2\chi(\dot{F})\\
&=(n-2)(2-k)+k\cdot n+2(2-k)\\
&=n(2-k)+k\cdot n\\
&=2n.
\end{align*}
\end{proof}

Let $\lambda_0$ be a nondegenerate contact form on an iterated planar contact manifold $M$ and let $$\lambda=f \cdot \lambda_0,$$ where $f$ is a positive function on $M$ and $\lambda$ is a nondegenerate contact form on $M$. \sm

\begin{figure}[h]
\vspace{-.5cm}
\begin{overpic}[scale=.7]{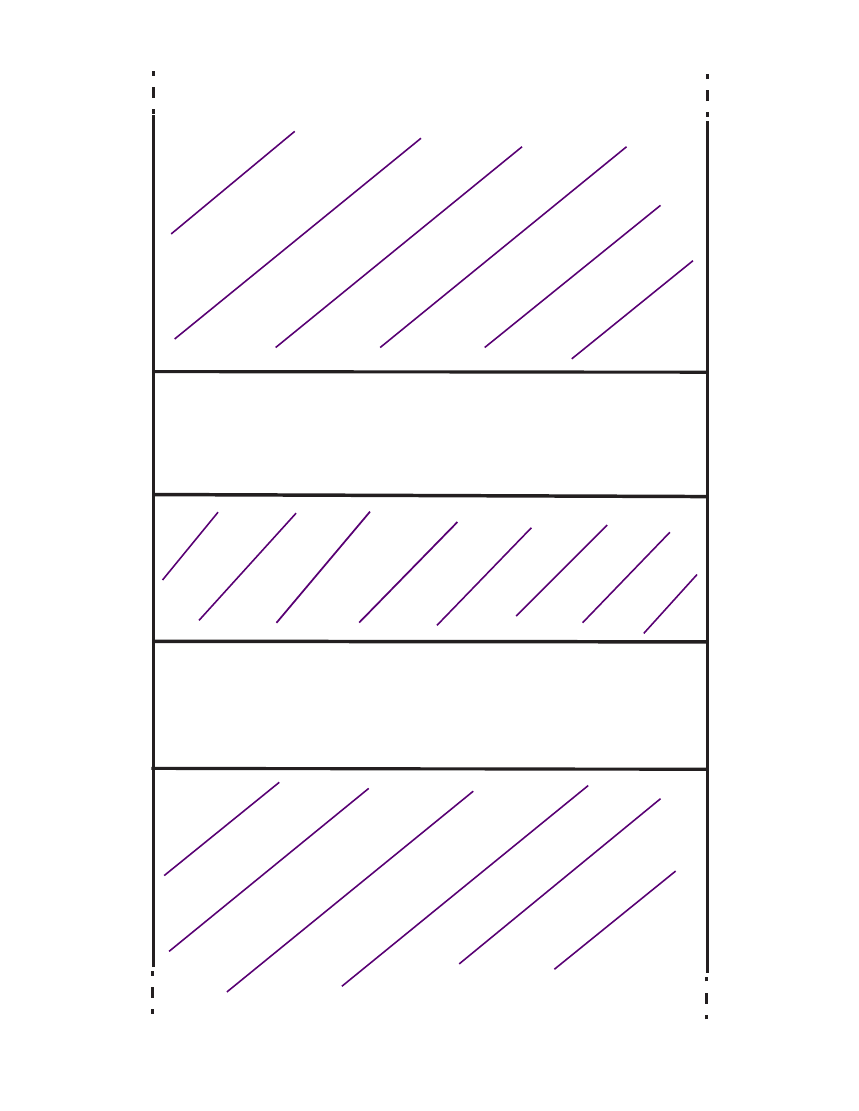}  
\small{
\put(2, 65){$t+1$}
\put(10, 54){$t$}
\put(6,41){$-t$}
\put(-5, 29){$-(t+1)$}}
\put(66, 10){$\R\times M$}
  \end{overpic}
\vspace{-.5cm}
\caption{The degeneration of the almost complex structure $J_0$ on $\R \times M$.}
    \label{mainthm}
\end{figure}

Let $I_t=[-t, t]$ and $t \in [1/2, \infty)$. Denote by $\tilde{J}_t$ an $(M, \lambda)$-compatible almost complex structure on $I_t \times M$. The almost complex structure $J_t$, for $t \geq \frac{1}{2}$, is defined as follows:
\[   
J_t = 
     \begin{cases}
       \tilde{J}_t &\text{on } [-t, t] \times M,\\
        J_0&\text{on } [t+1, \infty) \times M, \\
       J_0 &\text{on } (-\infty, -(t+1)] \times M.\\
     \end{cases}
\]

Assuming that $\tilde{J}_{t}$ is invariant in the Liouville direction, the almost complex structure $J_t$ on $\R \times M$ will then be the gluing of the almost complex structure $\tilde{J}_{t}$ on $I_t \times M$ with $J_0$ on the complement of $I_t \times M$ in $\R \times M$.\sm

The family $\{J_t\}_{t=1/2}^\infty$ is the $(M, \lambda)$-neck-stretching of $J_{1/2}$. By Proposition \ref{main_prop} and Lemma \ref{lemma}, for each $t \in [0, \infty)$, there exists a planar $J$-holomorphic curve $u_t$ which passes through $\{0\} \times M$.\sm

Let $z_t$ be a cylinder in the domain of a $J_t$-holomorphic curve $u_t$ in $I_t \times M$. Let $a(t)<t$. After possible $\R$-translation, the image $u_t(z_t)$ is close to $[0, a(t)] \times \gamma^{+}$ as $a(t) \to \infty$. Now, by Gromov-Hofer compactness, after passing to a subsequence of $u_t$, planar holomorphic curves in $I_t \times M$ will be asymptotic to closed Reeb orbits as $t \to \infty$. This concludes the proof of Main Theorem.

\qed

Notice that as $t \to \infty$, we need to make sure that compactness issues such as bubbling or breaking of curves do not arise. In what follows, we show that these degenerations never happen in our setting.

\subsubsection{Compactness of the moduli space} \label{subsec:compactify}

Here we study the compactness of the moduli space $\mathcal{M}_{\R \times M}(J_0)$ by using the relevant compactness results in the symplectic field theory and also by analyzing the existence of broken cylinders, and bubbling. For a more detailed exposition on the subject, we refer the reader to \cite{BEHWZ}. \sm

The compactness result in the symplectic field theory \cite{BEHWZ} states that holomorphic buildings obtained via neck-stretching consist of finite energy curves that are asymptotic to holomorphic cylinders on closed Reeb orbits. Moreover, asymptotic Reeb orbits must match up in pairs. That is, each positive end of a curve in level $i$ is matched with the corresponding negative end of the curve in level $i+1$. \sm

During the neck-stretching process described in Section \ref{NS}, one needs to keep track of possible degenerations of $J$-holomorphic curves into bubbles or broken cylinders. Next, we will study these degenerations and prove the following: 

\begin{lemma} \label{lemma} There exist no broken cylinders and bubblings of closed curves in $\mathcal{M}_{\R \times M}(J_0)$.
\end{lemma}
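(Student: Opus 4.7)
The plan is to address the two potential degenerations separately: the bubbling off of closed $J_0$-holomorphic spheres, and the breaking of the planar curves in $\mathcal{M}_{\R \times M}(J_0)$ into multi-level SFT buildings along intermediate closed Reeb orbits.

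To rule out closed-curve bubbles, I would use exactness. In the construction of $\mathcal{H}_0 = (\Omega,\Lambda)$ in Lemma \ref{lemma:wendl}, one has $\Omega = \omega_K|_M$ and $\Lambda = \iota_Z \omega_K|_M$ with $Z$ a Liouville vector field for $\omega_K$, so Cartan's formula gives $d\Lambda = \Omega$; hence the symplectic form $\omega_\phi = d(\phi(s)\Lambda) + \Omega$ on the symplectization $\R \times M$ is exact. If a non-constant $J_0$-holomorphic sphere $v \colon S^2 \to \R \times M$ arose as a bubble, Stokes' theorem would force $\int_{S^2} v^* \omega_\phi = 0$, contradicting the $\omega_\phi$-compatibility of $J_0$, which makes $v^* \omega_\phi$ pointwise non-negative and strictly positive wherever $dv \neq 0$. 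This excludes all non-constant sphere bubbles, hence all bubbling of closed curves.

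To rule out broken cylinders, I would suppose that a sequence $u_k \in \mathcal{M}_{\R \times M}(J_0)$ SFT-converges to a non-trivial holomorphic building $(u_\infty^1, \dots, u_\infty^\ell)$ with $\ell \geq 2$ non-trivial levels, and derive a contradiction from additivity of the Fredholm index together with control of the Reeb spectrum of $\mathcal{H}_0$. Each breaking in the building is realized along a closed Reeb orbit of $\mathcal{H}_0$ whose action is strictly smaller than the total action of $\gamma^+$. After the perturbation of the contact form constructed following \cite{W2}, and using the structure coming from the iterated planar Lefschetz fibration on the pages, the only closed Reeb orbits in the relevant action window are the binding orbits themselves together with the short closed geodesics that appear in the $\ct{n-1}$-type necks arising from the neck-stretching of Section \ref{ns}. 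Their Conley-Zehnder indices (with respect to the page trivialization) are explicitly known from the computation in Section \ref{indexformula}. Combining $\sum_i \operatorname{ind}(u_\infty^i) = \operatorname{ind}(u_k) = 2n$ with non-negativity of the index on each non-trivial level of a generic configuration, I would argue that the only way the indices can balance is for all but one level to be a trivial cylinder, which is discarded by convention, so the limit is a single planar curve in $\mathcal{M}_{\R \times M}(J_0)$.

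The main obstacle is the action-and-index bookkeeping in the second part: one must verify that no spurious short Reeb orbit of the perturbed contact form can serve as an intermediate matching orbit, and that no admissible combination of such orbits produces a building with non-negative per-level Fredholm indices summing to $2n$. This is handled by an inductive argument that mirrors the iterated planar structure, inheriting the Reeb dynamics on each floor from the lower-dimensional planar case established in \cite{W2} and tracking how the neck-stretching along the $\ct{n-1}$-necks constrains the admissible end configurations at every floor of the iteration.
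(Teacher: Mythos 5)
Your treatment of bubbling is essentially the paper's argument: exactness of the symplectization plus Stokes' theorem rules out non-constant closed sphere bubbles. One small caution on your derivation: $\Lambda = \iota_Z\omega_K|_M$ uses the stabilizing vector field $Z$, not the Liouville field $Z_K$, so Cartan's formula does not give $d\Lambda = \Omega$ in general (the SHS condition is only $d\Lambda = g\Omega$). The cleaner and correct route to exactness is that $\Omega = \omega_K|_M = d(\lambda_K|_M)$ directly, whence $\omega_\phi = d(\phi(s)\Lambda + \lambda_K|_M)$ is exact. Since the conclusion is the same, this is a detail, not a flaw.

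Your treatment of breaking, however, is a genuinely different strategy from the paper's and it has a gap that you flag yourself but do not close. You propose to rule out multi-level buildings by additivity of the Fredholm index together with per-level non-negativity of the index ``of a generic configuration.'' The difficulty is that non-negativity of the index on each non-trivial level is a transversality statement, and the almost complex structures in play are heavily constrained: they are required to be compatible with the iterated Lefschetz fibration and to make the fibers $J$-holomorphic. For such constrained $J$ one cannot simply invoke genericity, and indeed the paper devotes the whole of Section \ref{subsec:regularity} to establishing regularity by hand via the fibered local model near critical points. Without regularity of the limit levels, the index additivity $\sum_i\operatorname{ind}(u_\infty^i)=2n$ does not by itself exclude buildings with a negative-index level compensated elsewhere. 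You acknowledge the action-and-index bookkeeping as ``the main obstacle'' but do not supply the argument, so the proof is incomplete as written.

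The paper avoids this entirely by a more geometric argument: compose the holomorphic building with the holomorphic projection $\hat\phi\colon \R\times M\to\C$, note that the cylindrical ends asymptotic to the Reeb orbits of interest map to a point, and then use the open mapping theorem to conclude that any broken piece containing such an end must lie inside a single fiber $\widehat{W}^{2n}$. Since the fiber itself carries a planar Lefschetz fibration whose finite-energy planes have no negative ends, the only possibility for a lower-level piece is a trivial cylinder, which is discarded by convention; the argument then descends dimension by dimension through the iterated planar structure. This route is not only cleaner but also robust against the transversality issues that undermine the index-counting approach. If you want to rescue your argument you would at minimum need to establish per-level regularity for the relevant broken configurations of these constrained $J$, which is substantially harder than the fiber-containment observation the paper makes.
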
 

\begin{proof}
By Stokes' theorem, there exist no non-constant $J$-holomorphic spheres since Weinstein domains are exact manifolds. Therefore, bubbling of closed curves never occurs in all dimensions.\sm

As for breaking, one needs to perform a careful analysis. Recall that the monodromy map $h: W^{2n} \to W^{2n}$ of the open book on $M$ and the associated mapping torus $$M_h=[0, 1] \times W^{2n}  \bigm/ (0, h(z)) \sim (1, z).$$ Then there exists a natural fibration $$\phi: M_h \to S^1$$ whose fiber is $W^{2n}$. When we move along the base $S^1$ as depicted in Figure \ref{interpol}, the almost complex structure $J$ on each fiber starts changing due to the monodromy $h$. Recall that we interpolated the almost complex structures above the path $\sigma \in S^1$ to ensure that the almost complex structures are homotopic.\sm

Next we will examine possible degenerations of curves, such as breaking as we are changing the almost complex structure $J$ during this interpolation. To see this, consider the map $$\hat{\phi}: \R \times M \to \C.$$ 

Notice that breaking may occur if we obtain a holomorphic building with 2 levels, none of which is trivial, after symplectization. \sm

Let $n=2$. Consider the map $\hat{\phi}: \R \times M \to \C$ where $\dim_{\R} M=5$. Observe that the cylindrical ends in the fiber direction containing the Reeb orbits map down to a point. Hence, by the open mapping theorem, every end containing Reeb orbits has to live in a fiber since $\hat{\phi}$ is a holomorphic map.\sm

\begin{figure}[h]
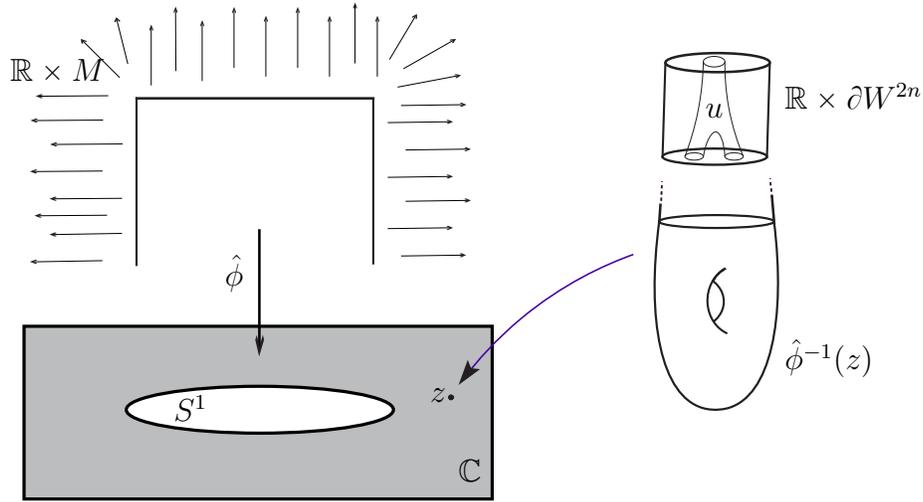

\begin{overpic}[scale=1.2]{breaking.pdf}
\put(51, 5){$\C$}
\put(48, 13.5){$z$}
\put(86, 17){$\hat{\phi}^{-1}(z)$}
\put(77.5, 44){$u$}
\put(86, 45){$\R \times \partial W^{2n}$}
\put(20.5, 11.5){$S^1$}
\put(3, 48){$\R \times M$}
\put(26,26){$\hat{\phi}$}
\end{overpic}
\caption{The figure on the left is the map $\hat{\phi}: \R \times M \to \C$. The figure on the right is the fiber $\hat{\phi}^{-1}(z)$ over $z \in \C$ and a 2-level building depicting the breaking of curves. Here $u$ represents the end of a broken curve containing the Reeb orbit in $\widehat{W}^{2n}$.}
\label{breaking}
\end{figure}

Let $u$ be a curve in the cylindrical end $\R \times \partial W^4$ containing the Reeb orbit depicted as in Figure \ref{breaking}. Hence, by the discussion above, $u$ has image inside $W^4$ and $W^4$ admits a planar Lefschetz fibration whose fibers are planar surfaces with no negative ends. Therefore, no such $u$ exists unless it is a trivial cylinder and trivial cylinders are ignored in holomorphic buildings by convention.\sm

Observe that this construction applies to $W^{2n}$ for all $n \geq 2$ inductively since the map $\hat{\phi}$ is a projection map from the completion of the page $W^{2n}$ to a point for all $n$, Thus, we could go one dimension down to use the information that $u$ is contained in a fiber. Therefore, the proof follows.\sm
\end{proof}

At this point, it is crucial to remind the reader that the almost complex structure $J$ is not cylindrical as explained in Section \ref{inducedcpx}. It is rather asymptotically cylindrical and this may affect the compactness discussion above. However, thanks to \cite{B}, we know that the compactification results in SFT generalizes to asymptotically cylindrical almost complex structures. 

\bibliographystyle{amsalpha}

\end{document}